\title{A Block Coordinate and Variance-Reduced Method\\ for Generalized Variational Inequalities of Minty Type}
\author{Jelena Diakonikolas\footnote{
Department of Computer Sciences, 
University of Wisconsin-Madison, 
\texttt{jelena@cs.wisc.edu}}}
\date{}
\newtheorem{theorem}{Theorem}
\newtheorem{lemma}{Lemma}
\def\1{\bm{1}}
\def\vzero{{\bm{0}}}
\def\vone{{\bm{1}}}
\def\vlambda{{\bm{\lambda}}}
\def\vrho{{\bm{\rho}}}
\def\vsigma{{\bm{\sigma}}}
\def\vpi{{\bm{\pi}}}
\def\vb{{\bm{b}}}
\def\ve{{\bm{e}}}
\def\vp{{\bm{p}}}
\def\vq{{\bm{q}}}
\def\vu{{\bm{u}}}
\def\vx{{\bm{x}}}
\def\vy{{\bm{y}}}
\def\vz{{\bm{z}}}
\def\vxb{\bar{\bm{x}}}
\def\vzb{\bar{\bm{z}}}
\def\vyb{\bar{\bm{y}}}
\def\vxh{\hat{\bm{x}}}
\def\vyh{\hat{\bm{y}}}
\def\vzh{\hat{\bm{z}}}
\def\mA{{\bm{A}}}
\def\mF{{\bm{F}}}
\def\mG{{\bm{G}}}
\def\mI{{\bm{I}}}
\def\mM{{\bm{M}}}
\def\mP{{\bm{P}}}
\def\mQ{{\bm{Q}}}
\def\mR{{\bm{R}}}
\def\mU{{\bm{U}}}
\def\mPhi{{\bm{\Phi}}}
\def\mFh{\widehat{\bm{F}}}
\def\mFt{\widetilde{\bm{F}}}
\def\cA{{\mathcal{A}}}
\def\cF{{\mathcal{F}}}
\def\cO{{\mathcal{O}}}
\def\cS{{\mathcal{S}}}
\def\sR{{\mathbb{R}}}
\newcommand{\E}{\mathbb{E}}
\newcommand{\R}{\mathbb{R}}
\DeclareMathOperator*{\argmax}{arg\,max}
\DeclareMathOperator*{\argsup}{arg\,sup}
\DeclareMathOperator*{\argmin}{arg\,min}
\DeclareMathOperator{\sign}{sign}
\newcommand{\innp}[1]{\langle #1 \rangle}
\newcommand{\Gap}{\mathrm{Gap}}
\newcommand{\dom}{\mathrm{dom}}
\newcommand{\vect}{\mathrm{vec}}
\newcommand{\nnz}{\mathrm{nnz}}
\newcommand{\Lpq}{L_\mathbf{p, q}}
\newcommand{\bl}[1]{^{(#1)}}
\begin{document}

\maketitle

\begin{abstract}
    Block coordinate methods have been extensively studied for minimization problems, where they come with significant complexity improvements whenever the considered problems are compatible with block decomposition and, moreover, block Lipschitz parameters are highly nonuniform. For the more general class of variational inequalities with monotone operators, essentially none of the existing methods transparently shows potential complexity benefits of using block coordinate updates in such settings. Motivated by this gap, we develop a new randomized block coordinate method and study its oracle complexity and runtime. We prove that in the setting where block Lipschitz parameters are highly nonuniform---the main setting in which block coordinate methods lead to high complexity improvements in any of the previously studied settings---our method can lead to complexity improvements by a factor order-$m$, where $m$ is the number of coordinate blocks. The same method further applies to the more general problem with a finite-sum operator with $m$ components, where it can be interpreted as performing variance reduction. Compared to the state of the art, the method leads to complexity improvements up to a factor $\sqrt{m},$ obtained when the component Lipschitz parameters are highly nonuniform. 
\end{abstract}

\section{Introduction}

Block coordinate methods---which update only a subset of variables at a time---are a fundamental class of optimization methods with a long history in optimization research and practice. Some of the earliest examples falling in this category are the method of Kaczmarz from 1937 \cite{karczmarz1937angenaherte} for solving linear systems of equations and Osborne's algorithm from 1960 \cite{osborne1960pre} for matrix balancing. On the theoretical front, research on block coordinate methods flourished following the introduction of randomized block coordinate methods and techniques for analyzing them \cite{strohmer2009randomized,nesterov2012efficiency,leventhal2010randomized}, in the context of convex minimization. 

Broadly speaking, there are two major classes of block coordinate methods that are used as a black box in practice and have been the subject of research in optimization algorithms, categorized according to the order in which blocks of coordinates are updated: (1) randomized methods \cite{nesterov2012efficiency,strohmer2009randomized,nesterov2017efficiency,qu2016coordinate}, which fix a probability distribution over the coordinate blocks and choose which block to update by sampling with replacement; and (2) cyclic methods \cite{beck2013convergence,song2023cyclic,ortega1970iterative,sun2019worst}, which select a (deterministic or random) permutation of blocks and update all coordinate blocks in one pass, in order following the selected permutation. 

At present, convergence properties of randomized methods  are well-understood in convex optimization and in primal-dual methods for min-max optimization, where some versions can also be seen as performing variance reduction (see, e.g., \cite{song2021variance,mehta2024primal,alacaoglu2022complexity,carmon2020coordinate}). In the setting of convex optimization, there is a clear separation between randomized methods, cyclic methods, and traditional full vector update (i.e., single block) methods \cite{sun2019worst}. In particular, for problems where block coordinate updates are meaningful (i.e., where their update cost is proportional to the block size), the complexity of randomized methods is never higher than the complexity of full vector update methods, while it can be lower by a factor scaling linearly or with the square-root of the total number of blocks $m$ (often as large as the problem dimension) \cite{nesterov2012efficiency,nesterov2017efficiency,lee2013efficient}. By contrast, the complexity of cyclic methods is usually higher than the complexity of full vector update methods \emph{in the worst case}, by factors scaling polynomially (as $m^\alpha,$ for $\alpha \in\{1, 1/2, 1/4\}$) with the number of blocks \cite{song2023cyclic,beck2013convergence,sun2019worst,lin2014accelerated}.\footnote{It is of note that despite their unfavorable worst-case complexity, cyclic methods are often preferred in practice, which has motivated recent research on obtaining more fine-grained complexity bounds for this class of methods; see, e.g., \cite{wright2020analyzing,lee2019random,gurbuzbalaban2017cyclic,song2023cyclic}.}

On the other hand, much less is understood about convergence of block coordinate methods for solving the more general class of  variational inequality (VI) problems. VIs associated with monotone operators are equilibria problems that strictly generalize convex minimization and convex-concave min-max optimization. In Euclidean settings, they are further closely related to solving fixed-point equations with nonexpansive (1-Lipschitz) operators. For these general problems, current results on block coordinate methods in the literature \cite{yousefian2018stochastic,he2020point,kotsalis2022simple,song2023cyclic} do not even transparently show that block coordinate updates can lower the algorithm complexity under standard assumptions defining the associated problem classes that these methods address.

The main contribution of this work is a new randomized block coordinate method for a general class of variational inequalities (defined below, in the next subsection). Our method has provably lower complexity than full vector update methods under a block Lipschitz assumption, in the setting where block Lipschitz constants are highly nonuniform---the main setting in which block coordinate methods are known to have an edge over full vector update methods. The potential complexity improvements in this regime are by a factor scaling linearly with the number of blocks $m.$ 

The same method can be used more generally to address problems where the operator has finite-sum structure with $m$ components, in which case our method can be interpreted as performing variance reduction. Compared to state of the art variance-reduced methods \cite{alacaoglu2022stochastic,cai2024variance,carmon2019variance}, there are additional potential complexity gains of order-$\sqrt{m}$ when the component Lipschitz parameters are highly nonuniform. 

To make these statements more specific, we next formally introduce the considered problem setup and state our main results, followed by an overview of related work. 

\subsection{Problem Setup}\label{sec:problem-setup}

We are interested in solving Generalized (or regularized) Minty Variational Inequality (GMVI) problems, in which the goal is to find $\vx^* \in \dom(g)$ such that
\begin{equation}\label{eq:main-problem}\tag{P} 
\innp{\mF(\vx), \vx - \vx^*} + g(\vx) - g(\vx^*) \geq 0, \; \forall \vx \in \sR^d, 
\end{equation}
where $\mF:\sR^d \to \sR^d$ is a monotone Lipschitz operator and $g: \sR^d \to \sR\cup \{+\infty\}$ is a proper, extended-valued, convex, lower semicontinuous %, block-separable\footnote{Block separability is only relevant in the context of block coordinate decomposition. For the more general finite-sum problems, $g$ need not be assumed block-separable.} 
function with an efficiently computable proximal operator. 
Given an error parameter $\epsilon > 0,$ the $\epsilon$-approximation variant of this problem is to find $\vx^*_\epsilon$ such that
\begin{equation}\label{eq:main-problem-eps}\tag{P$_\epsilon$} 
\innp{\mF(\vx), \vx^*_\epsilon - \vx} - g(\vx) + g(\vx^*_\epsilon) \leq \epsilon, \; \forall \vx \in \sR^d. 
\end{equation}
For conciseness, we further define
\begin{equation}\label{eq:gap-def}
    \Gap(\vx^*_\epsilon, \vx) = \innp{\mF(\vx), \vx^*_\epsilon - \vx} - g(\vx) + g(\vx^*_\epsilon)
\end{equation}
for arbitrary but fixed $\vx \in \R^d,$ and equivalently focus on finding $\vx^*_\epsilon$ such that $\Gap(\vx^*_\epsilon, \vx) \leq \epsilon.$ 
(Here we assume that $g$ is either strongly convex or its domain is compact, as otherwise such guarantees are not possible. An alternative is that the above inequality holds for all $\vx$ from a compact set, which could be, for example, a ball of constant radius, centered at $\vx^*,$ folowing similar considerations as in \cite{nesterov2007dual}.) 

To simultaneously handle the cases in which $\mF$ is possibly strongly monotone and/or $g$ is strongly convex, we assume that $\mF$ is only monotone and $g$ is strongly convex with parameter $\gamma \geq 0$ (when $\gamma = 0,$ it is simply convex). This is without loss of generality.

Throughout the paper, we assume that we are given a finite-sum decomposition of $\mF,$ meaning that there are operators $\mF_1, \dots, \mF_m$ such that $\mF$ can be expressed as
\begin{equation}\label{eq:finite-sum}
    \mF(\vx)  = \sum_{j=1}^m \mF_j(\vx), \quad \forall \vx \in \dom(g).
\end{equation}

A prototypical example that motivated this research and we use throughout to illustrate our results comes from block decomposition of the coordinates of $\vx,$ in which case our method can be viewed as a block coordinate method. 
For block coordinate strategies to make sense computationally, when  considering block coordinate settings, we assume that $F$ is ``block coordinate friendly,'' meaning that there exists a partition of the set $\{1, 2, \dots, d\}$ into $m \leq d$ subsets $\cS^1, \cS^2, \dots, \cS^m$ such that the computational complexity of evaluating $\mF\bl{1}(\vx_1), \mF\bl{2}(\vx_2), \dots \mF\bl{m}(\vx_m)$ at $m$ different points $\vx_1, \dots, \vx_m \in \dom(g)$ is of the same order as the complexity of evaluating the full operator $\mF(\vx),$ at any point $\vx$, where $\mF\bl{j}(\vx)$ for $j \in \{1, \dots, m\}$ denotes the coordinates of $\mF(\vx)$ indexed by the elements of $\cS^j.$ This is true, for example, for operators that are linear, block separable, or expressible as a sum of operators that are block coordinate friendly with respect to the same block partition. Function $g(\vx) = \sum_{j=1}^m g^j(\vx\bl{j})$ in this setting is assumed to be block separable w.r.t.\ the same block partition under which $\mF$ is block coordinate friendly.

As is standard, we further define linear operators $\mU^j,$ $j \in \{1,\dots, m\}$, to be such that for any $j \in \{1,\dots, m\}$ and any $\vx,$ we have that the vector $\mU^j \vx$ is such that $(\mU^j \vx)\bl{j} = \vx\bl{j}$, while all its remaining elements are zero. It should then become clear that in the block coordinate setting, we can use $\mF_j = \mU^j \mF$ to write our problem in the finite-sum form stated in \eqref{eq:finite-sum}. 

For the purpose of streamlined comparison to the literature, we will sometimes assume that each component $\mF_j$ is $L_j$-Lipschitz continuous for some parameter $L_j \in (0, \infty),$ $j \in \{1, \dots, m\}.$ In particular, in such cases we assume that 
%Same as \cite{kotsalis2022simple}, we assume that there exist positive constants $L_1, L_2, \dots, L_m$ such that 
for any $\vx, \vy \in \dom(g)$ and any $j \in \{1, \dots, m\},$
\begin{equation}\label{eq:block-Lip-assmpt}
    \|\mF_j(\vx) - \mF_j(\vy)\|_{*} \leq L_j \|\vx - \vy\|,  
\end{equation}
where $\|\cdot\|_*$ and $\|\cdot\|$ are a pair of norms that are dual to each other. In the block coordinate settings, under a mild assumption that $\|\mF\bl{j}(\vx)\|_* = \|\mU^j\mF(\vx)\|_*,$ for all $\vx \in \dom(g),$ this is the same assumption as made in prior work addressing the same class of problems and methods \cite{kotsalis2022simple}. 
We take $L_1, L_2, \dots, L_m$ to be the smallest  constants for which \eqref{eq:block-Lip-assmpt} holds for all $\vx, \vy \in \dom(g)$ and denote $\vlambda := (L_1, L_2, \dots, L_m)^\top$. %the vector with entries $L_1, L_2, \dots, L_m$. 

If $L$ is the Lipschitz constant of the full operator $\mF,$ i.e., if for all $\vx, \vy \in \dom(g)$,
\begin{equation}\label{eq:full-Lip-assmpt}
    \|\mF(\vx) - \mF(\vy)\|_* \leq L \|\vx - \vy\|,  
\end{equation}
and $L$ is chosen as the smallest such constant, then  
we have 
\begin{equation}\label{eq:component-vs-full-Lip}
    \|\vlambda\|_\infty = \max_{1\leq i \leq m} L_i \leq L \leq \sum_{i=1}^m L_i = \|\vlambda\|_1.
\end{equation}  
Both inequalities in \eqref{eq:component-vs-full-Lip} are generally tight, in the sense that for either inequality there exists an operator that satisfies it with equality. However, it is also the case that either (or both) of these inequalities can be loose on general problem instances.  We are particularly interested in the regime where $\vlambda$ is highly non-uniform, which results in $\|\vlambda\|_\infty$ and $\|\vlambda\|_1$ being of the same order. This is the primary regime in which block coordinate methods are used and lead to improved complexity bounds and practical speedups compared to full vector update (single-block) methods, in any of the optimization settings mentioned in the introduction. 

Our results are expressed in terms of a parameter $\Lpq$ that depends on probability distributions $\vp, \vq \in \Delta_m$ used in our algorithm, defined by
\begin{equation}\label{eq:Lpq-def}
    \Lpq := \sqrt{\sup_{\vx, \vy \in \dom(g), \vx \neq \vy} \frac{\sum_{j=1}^m \frac{1}{p_j {q_j}^2}\|\mF_j(\vx) - \mF_j(\vy)\|_{*}^2}{\|\vx - \vy\|^2}},
\end{equation}
as this bound can sometimes provide a smaller constant than what we get from the worst-case block coordinate assumptions in \eqref{eq:block-Lip-assmpt} (see Section \ref{sec:apps} for  examples). Observe that under \eqref{eq:block-Lip-assmpt}, $\Lpq \leq \sqrt{\sum_{j=1}^m \frac{1}{p_j {q_j}^2} {L_j}^2}.$ 

\subsection{Main Result}\label{sec:main-results}

Our main result is an algorithm that outputs a solution with error $\epsilon > 0$ (where the error is defined as the expected gap in the general regime or distance to the solution $\vx_*$ in the strongly monotone regime) after 
\begin{equation}\notag
     \cO\bigg(\min\bigg\{\frac{\Lpq \sup_{\vx \in \dom(g)} D(\vx, \vx_0)}{\epsilon},\; \Big(\max_{1\leq j \leq m} \frac{1}{q_j} + \frac{\Lpq}{\gamma}\Big)\log\Big(\frac{\Lpq D(\vx_*, \vx_0)}{\epsilon}\Big) \bigg\}\bigg).
\end{equation}
oracle queries to blocks $\mF\bl{j}$ of coordinates of $\mF$ in block coordinate settings, or component operators $\mF_j$ in the more general finite-sum settings. In particular, under block/component Lipschitz assumption \eqref{eq:block-Lip-assmpt} and importance-based sampling probabilities $\vp, \vq$ (see Section \ref{sec:sampling-distributions} for precise definitions), this complexity is
\begin{equation}\label{eq:it-compl-main}
  \cO\bigg(\min\bigg\{\frac{\|\vlambda\|_{1/2} \sup_{\vx \in \dom(g)} D(\vx, \vx_0)}{\epsilon},\; \Big(m + \frac{\|\vlambda\|_{1/2}}{\gamma}\Big)\log\Big(\frac{\|\vlambda\|_{1/2} D(\vx_*, \vx_0)}{\epsilon}\Big) \bigg\}\bigg),
\end{equation}
where $\|\vlambda\|_{1/2} = \big(\sum_{j=1}^m {L_j}^{1/2}\big)^2$.

We recall here that the optimal oracle complexity measured in terms of the oracle queries to the full operator $\mF$ and attained by methods such as Mirror-Prox/Extragradient \cite{nemirovski2004prox,korpelevich1976extragradient}, Nesterov's dual extrapolation \cite{nesterov2007dual}, Popov's method \cite{popov1980modification}, and their variants, is
\begin{equation}\label{eq:it-compl-full-update}
  \cO\bigg(\min\bigg\{\frac{L \sup_{\vx \in \dom(g)} D(\vx, \vx_0)}{\epsilon},\; \frac{L}{\gamma}\log\Big(\frac{L D(\vx_*, \vx_0)}{\epsilon}\Big) \bigg\}\bigg),
\end{equation}
where $L$ is the Lipschitz parameter of $\mF$ and, as noted above, is at least $\|\vlambda\|_\infty$ but can be as large as $\|\vlambda\|_1.$

In standard applications of block coordinate methods, the cost of evaluating one block $\mF\bl{j}$ of $\mF$ is of the order of $1/m$ of the cost of evaluating the full operator $\mF,$ on average. Thus the total number of full operator evaluations in this setting becomes $1/m$ times the bound in \eqref{eq:it-compl-main}. Because $\|\vlambda\|_{\infty} \leq \|\vlambda\|_{1/2} \leq m^2 \|\vlambda\|_{\infty}$, in the worst case where $\vlambda$ is (close to) uniform and $L = \|\vlambda\|_\infty,$ the complexity of our method can be $m$ times worse than the complexity of full vector update methods. However, this is also the setting in which block coordinate methods on other problem classes generally do not improve over full vector update methods. On the other end of the spectrum, where $\vlambda$ is highly non-uniform---the primary setting in which block coordinate methods are  used---we have that $\|\vlambda\|_\infty$ and $\|\vlambda\|_{1/2}$ are of the same order, in which case our method's complexity is no worse than the complexity of full vector update methods and is $m$ times lower if $L/\gamma \geq m$. Similar conclusions apply when comparing our method to methods not accounting for sum-decomposability \cite{nemirovski2004prox,korpelevich1976extragradient,nesterov2007dual,popov1980modification} in the more general finite-sum settings \eqref{eq:finite-sum}. 

\subsection{Related Work}

As mentioned before, block coordinate methods have been broadly investigated as pertaining to (convex or nonconvex) minimization problems \cite{qu2016coordinate,nesterov2012efficiency,nesterov2017efficiency,diakonikolas2018alternating,beck2013convergence,lin2014accelerated,cai2023cyclic,lee2013efficient,allen2016even,richtarik2014iteration,fercoq2015accelerated,tseng2009coordinate} and a specific subclass of GMVI corresponding to min-max optimization, focusing on primal-dual methods, where coordinate updates are typically performed either only on the primal or only on the dual side \cite{alacaoglu2022complexity,song2021variance,chambolle2018stochastic,mehta2024primal,carmon2020coordinate,dang2014randomized,alacaoglu2017smooth,carmon2019variance,latafat2019new, fercoq2019coordinate, alacaoglu2020random}.

When it comes to the general GMVI class considered in this work, much less is understood about the convergence of block coordinate methods. Initial results on randomized block coordinate methods \cite{yousefian2018stochastic} considered milder block Lipschitz assumptions than \eqref{eq:block-Lip-assmpt}, stated for vectors $\vx, \vy$ differing only over the coordinate block $j$---more in line with the literature on minimization problems---but in turn required additional assumptions such as bounded norm of block operators $\mF\bl{j}$ and led to slower convergence rates of the order $1/\sqrt{k}$ for general monotone operators and a sublinear $1/k$ rate under strong monotonicity. 
Similarly, \cite{chow2017cyclic} obtained results for a cyclic block coordinate method with the same $1/\sqrt{k}$ convergence rate, under the same block Lipschitz assumption as \eqref{eq:block-Lip-assmpt}, assuming $\mF$ is cocoercive. On the other hand, results with the optimal sublinear convergence rate $1/k$ in the general case and linear convergence rate in the case of strongly convex $g$ are very recent and  are all applicable under block Lipschitz assumptions bounding the change in the operator $\mF\bl{j}$ between any two vectors $\vx, \vy,$ similar to \eqref{eq:block-Lip-assmpt}. The two main results in this domain are \cite{song2023cyclic} and \cite{kotsalis2022simple}, addressing cyclic and randomized block coordinate methods, respectively. 

The work concerning cyclic block coordinate methods \cite{song2023cyclic} provides the first cyclic method for GMVI with the optimal $1/k$ convergence rate. The block Lipschitz assumption is somewhat different than ours, defined by $\|\mF\bl{j}(\vx) - \mF\bl{j}(\vy)\|_2 \leq \sqrt{(\vx - \vy)^\top \mQ^j (\vx - \vy)}$ for some fixed symmetric positive semidefinite matrices $\mQ^j.$ Their complexity result is expressed in terms of a summary Lipschitz constant $\hat{L} = \sqrt{\|\sum_{j=1}^m \widehat{\mQ}^j\|_2}$, where $\|\cdot\|_2$ denotes the operator/spectral norm of matrices, and $\widehat{\mQ}^j$ are matrices obtained from $\mQ^j$ by zeroing out rows and columns corresponding to the coordinates from the first $j-1$ coordinate blocks. This is a fine-grained complexity result that is not directly comparable to ours in full generality. When specialized to the block Lipschitz assumption \eqref{eq:block-Lip-assmpt}, which would correspond to choosing $\mQ^j = {L_j}^2 \mI$, where $\mI$ is the identity matrix, we have that $\hat{L} = \|\vlambda\|_2$ and the resulting complexity (number of full cycles) in \cite{song2023cyclic} boils down to $\cO\Big(\min\Big\{\frac{\|\vlambda\|_2\sup_{\vx \in \dom(g) }D(\vx, \vx_0)}{\epsilon}, \, \frac{\|\vlambda\|_2}{\gamma}\log\Big(\frac{L D(\vx_*, \vx_0)}{\epsilon}\Big)\Big\}\Big)$. Because the results from \cite{song2023cyclic} are for Euclidean setups and in this case it can be argued that $\|\vlambda\|_2/\sqrt{m} \leq L \leq \|\vlambda\|_2$, where we recall $L$ to be the Lipschitz constant of the full operator $\mF,$ we get that the complexity from \cite{song2023cyclic} in this setting is never better than the complexity of full vector update methods (stated in \eqref{eq:it-compl-full-update}), while it is potentially worse by a factor $\sqrt{m}.$ By comparison, as discussed in Section \ref{sec:main-results}, our complexity can be worse than that of full-vector update methods by a factor $m$, while on the other hand it can also be better by a factor $m,$ which happens when $\vlambda$ is highly nonuniform.  

In the domain of randomized block coordinate methods for GMVI, \cite{kotsalis2022simple} is the only result we are aware of that attains the optimal convergence rate in the general monotone case. Compared to our result stated in \eqref{eq:it-compl-main}, $\|\vlambda\|_{1/2}$ is replaced by $m^2 \|\vlambda\|_\infty,$ while the dependence on the remaining problem parameters is the same. Since $\|\vlambda\|_{1/2} \leq m^2 \|\vlambda\|_\infty$, our result strictly improves over \cite{kotsalis2022simple}, with the improvement being of the order $m^2$ when $\vlambda$ is highly nonuniform (i.e., when $\|\vlambda\|_{1/2} \approx \|\vlambda\|_\infty$). It is worth noting that \cite{kotsalis2022simple} also considers other problems involving stochastic variational inequalities, motivated by applications in reinforcement learning, which are outside  the scope of this work.

Finally, our work is related to the literature on variance reduction algorithms for finite-sum problems, a topic intensely investigated in the machine learning literature in the past decade \cite{johnson2013accelerating, defazio2014saga, nguyen2017sarah,schmidt2017minimizing, allen2017katyusha, palaniappan2016stochastic,chavdarova2019reducing,carmon2019variance,alacaoglu2022stochastic,gower2020variance,song2021variance,woodworth2016tight,han2021lower}. We compare to this literature separately from block coordinate methods, as their focus is different (improving the dependence on the number of components in the finite-sum), while there are typically no considerations regarding possibly sparse updates when component operators $\mF_j$ are sparse. The main examples of stochastic variance-reduced estimators include (1) estimators that are based on infrequently computing high-accuracy (e.g., full $\mF$) estimates of $\mF$ and combining them with low-accuracy (e.g., single randomly sampled $\mF_j$ and appropriately rescaled) estimates of $\mF_j$ computed in the remaining iterations. These include estimators such as SVRG \cite{johnson2013accelerating,kovalev2020don} and SARAH/SPIDER/PAGE \cite{nguyen2017sarah,fang2018spider,li2021page}; (2) estimators that are based on computing $\mF$ only once, at initialization, and using low-complexity and low-accuracy estimates in the entire algorithm run, at the expense of potentially higher memory requirements. These are known as SAGA-type estimators \cite{defazio2014saga,schmidt2017minimizing}. Our variance-reduced method belongs to the latter category. 

In the context of finite-sum GMVI, there is a series of (mostly recent) results \cite{palaniappan2016stochastic,carmon2019variance,alacaoglu2022stochastic,cai2024variance} that are optimal in the sense of oracle complexity under a suitable ``Lipschitz in expectation'' assumption about the operator $\mF$ \cite{han2021lower}. Under component Lipschitz assumptions \eqref{eq:block-Lip-assmpt} it is not known whether these methods are optimal or off by a factor $\sqrt{m};$ see the discussion in \cite[Section 5.6]{han2021lower}. Because of the different Lipschitz assumptions on $\mF$ and its components, the results from this prior work are not directly comparable to ours in full generality. If one takes the most natural and usual assumption that individual estimators for $\mF$ are chosen as $\mF_j/p_j$ sampled with probability $p_j$ according to some probability distribution $\vp = (p_1, \dots, p_m)^\top,$ then the complexity results in state of the art methods \cite{cai2024variance,alacaoglu2022stochastic,carmon2019variance} would replace $\Lpq$ in our bound \eqref{eq:it-complexity-general} by $\sqrt{m} M,$ where  
\begin{equation}\notag
     M := \sqrt{\sup_{\vx, \vy \in \dom(g), \vx \neq \vy} \frac{\sum_{j=1}^m \frac{1}{p_j}\|\mF_j(\vx) - \mF_j(\vy)\|_{*}^2}{\|\vx - \vy\|^2}}.
\end{equation}
When specialized to component Lipschitzness in \eqref{eq:block-Lip-assmpt}, the difference between the complexity bounds is in scaling with $\|\vlambda\|_{1/2}$ (in our work) versus $\sqrt{m}\|\vlambda\|_2$ (in prior work). Thus, our bound can be worse by a factor up to $\sqrt{m}$, which occurs when $\vlambda$ is (near-)uniform (in which case variance-reduced methods have worse complexity than classical methods such as \cite{nemirovski2004prox,korpelevich1976extragradient,nesterov2007dual,popov1980modification}). Notably, our bound is better by a factor  $\sqrt{m}$ than prior work when $\vlambda$ is highly nonuniform, which is the main setting in which variance-reduced methods exhibit  significant improvements over classical methods.  

Further comparison to related work on specific example problems is provided in Section \ref{sec:apps}. 

\section{Randomized Extrapolated Method for GMVI: Algorithm Description}\label{sec:algo}

We begin our technical discussion by describing our algorithm, for which the complete pseudocode is provided in Algorithm \ref{alg:main}. We state it in a form that is convenient for the analysis but that enforces full vector updates for $\vx_k$. However, as we discuss below, the algorithm can be modified to perform lazy updates and ensure only a subset of the coordinates is updated in each iteration, ensuring low per-iteration cost in the setting of block coordinate-style methods.

\begin{algorithm}
\caption{Randomized Extrapolated Method (REM; Analysis Version)}\label{alg:main}
    \begin{algorithmic}[1]
        \State \textbf{Input}: $\vx_0 \in \dom(g),\, \vp \in \Delta_m,\, \vq \in \Delta_m, K, \gamma$
        \State \textbf{Initialization}: $\mFt_{0, j} \leftarrow \mF_j(\vx_0)$ for $j \in \{1, \dots, m\},$ $\mFt_0 \leftarrow \mF(\vx_0),$ $a_0 \leftarrow A_0 \leftarrow 0,$ $\vz_0 \leftarrow 0$
        \For{$k = 1:K$}
        \State Choose step size $a_k$ (see Theorem \ref{thm:main}) and update  $A_k \leftarrow a_k + A_{k-1}$
        \State \parbox[t]{\dimexpr\textwidth-\leftmargin-\labelsep-\labelwidth}{Randomly draw $j_k$ from $\{1, \dots, m\}$ according to the probability distribution $\vp \in \Delta_m,$ independently of any prior random algorithm choices\strut}
        \State $\mFh_k \leftarrow \mFt_{k-1} + \frac{a_{k-1}}{a_k p_{j_k}}(\mF_{j_k}(\vx_{k-1}) - \mFt_{k-2, j_k})$
        \State $\vz_k \leftarrow \vz_{k-1} + a_k \mFh_k$
        \State $\vx_k \leftarrow \argmin_{\vu \in \R^d}\Big\{\innp{\vz_k, \vu} + A_k g(\vu) + D(\vu, \vx_{0})\Big\}$
        \State \parbox[t]{\dimexpr\textwidth-\leftmargin-\labelsep-\labelwidth}{Randomly draw $j_k'$ from $\{1, \dots, m\}$ according to the probability distribution $\vq \in \Delta_m,$ independently of any prior random algorithm choices\strut}
        %\State $\mFt_k \leftarrow \mFt_{k-1} + \mF_{j_k}(\vx_k) - \mFt_{k-1, j_k}$
        \State $\mFt_{k, j} \leftarrow \begin{cases} \mFt_{k-1, j}, & \text{ if } j \neq j_k'\\ \mF_{j}(\vx_k), & \text{ if } j = j_k' \end{cases}$
        \EndFor
    \end{algorithmic}
    \Return $\vxb_k = \frac{1}{A_k}\sum_{i=1}^k a_i \vx_i$ or $\vx_k$
\end{algorithm}

We now discuss how the algorithm is derived. 
Algorithm \ref{alg:main} is an iterative algorithm that follows dual-averaging- (or ``lazy'' mirror descent-)style updates defined by 
\begin{equation}\label{eq:alg-iteration}
    \vx_{k} = \argmin_{\vu \in \R^d}\Big\{\sum_{i=1}^k a_i \big(\innp{\mFh_i, \vu} + g(\vu)\big) + D(\vu, \vx_{0})\Big\},
\end{equation}
where $\mFh_i$ are conveniently chosen ``extrapolated operators'' (to be defined shortly), $a_i$ are algorithm step sizes that determine the convergence rate, and $D(\vu, \vx_{0})$ is the Bregman divergence of a function that is 1-stronly convex w.r.t.\ $\|\cdot\|$ and chosen so that the minimization problem in \eqref{eq:alg-iteration} is efficiently solvable. When REM (Algorithm \ref{alg:main}) is specialized to block coordinate settings, $D(\vx, \vx_0)$ is further assumed to be block separable so that $D(\vu, \vx_{0}) = \sum_{j=1}^m D^j(\vu\bl{j}, \vx_{0}\bl{j})$. 

Ignoring the Bregman divergence term, which is problem-specific (and there are standard choices depending on the norm $\|\cdot\|$ and function $g$), to fully specify the algorithm iterations, we need to define $\mFh_i$ and the step sizes $a_i.$ The latter will be set by the analysis, while for the former we take what can be interpreted as an \emph{extrapolated operator} update, defined by $\mFh_k \leftarrow \mFt_{k-1} + \frac{a_{k-1}}{a_k p_{j_k}}(\mF_{j_k}(\vx_{k-1}) - \mFt_{k-2, j_k})$. 
      %  \State $\vz_k \leftarrow \vz_{k-1} + a_k \mFh_k$. 
Similar operator extrapolation strategies appear in prior work on block coordinate methods for GMVI \cite{song2023cyclic,kotsalis2022simple}, though the specific choices are different in both algorithms from prior work and our algorithm (e.g., only our work combines the full estimator $\mFt_{k-1}$ with partial extrapolation $\mF_{j_k}(\vx_{k-1}) - \mFt_{k-2, j_k}$ in the update for $\mFh_k$ and uses rescaling by the sampling probability $p_{j_k}$). For the operator estimator $\mFt_k = \sum_{j=1}^m \mFt_{k, j},$ 
we maintain a table with entries $\mFt_{i, j}$, of which only the entry $j_i$ is updated in iteration $i,$ as stated in Line 10, Algorithm~\ref{alg:main}.

\paragraph{Block coordinate considerations for Algorithm \ref{alg:main}.} 
In the setting of block coordinate methods, we can define $\mFh_k$ %as a list/vector of size $d$ (as opposed to a table of size $m \times d$) 
via  
\begin{equation}\label{eq:F-hat-def}
    \mFh_k = \mFt_{k-1} + \frac{a_{k-1}}{a_k p_{j_k}}\mU^{j_k}(\mF(\vx_{k-1}) - \mFt_{k-2}), 
\end{equation}
where $\mFt_k$ is a list of block operators $\mF\bl{1}, \mF\bl{2}, \dots, \mF\bl{m},$ updated in a block coordinate manner. In particular, in this setting, $\mFt_k$ can be initialized as $\mFt_0 = \mF(\vu_0)$, while in each iteration $k \geq 1$, a randomly selected block $j_k' \in \{1, \dots, m\}$ is updated by setting $\mFt_k\bl{j_k'} = \mF\bl{j_k'}(\vx_{k-1}).$

Although this definition technically leads to a full vector update for $\vx_k$ (as $\mFh$ can have all non-zero entries), we observe that the blocks of $\vx_k$ can be  updated lazily, resulting in the same per-iteration complexity as the more traditional block coordinate methods. In particular, let $A_k = \sum_{i=1}^k a_i$ and $\vz_k = \sum_{i=1}^k a_i \mFh_i,$ so that $\vx_k$ can be equivalently defined via 
\begin{equation}\label{xk-equiv-block-def}
    \vx_k\bl{j} = \argmin_{\vu\bl{j} \in \R^{|\cS^j|}}\Big\{\innp{\vz_k\bl{j}, \vu\bl{j}} + A_k g^j(\vu\bl{j}) + D^j(\vu\bl{j}, \vx_0\bl{j})\Big\}.
\end{equation}
Although this update is indeed a full vector update, it does not need to be carried out in every iteration. Instead, (blocks of) coordinates can be updated lazily, when they are needed: either because they got selected (i.e., when $j = j_k$) or if they are used for the computation of $F\bl{i_k}(\vx_{k-1})$. Here we are tacitly assuming that reading a block of coordinates costs as much as updating them: this is true in most cases of interest, particularly under the assumption that the update \eqref{xk-equiv-block-def} is computable in closed form (which is the main setting in which the considered class of methods is used). The key observation is that at any iteration, vector $\vz_k\bl{j}$ for $j \neq j_k$ can succinctly be expressed as
\begin{equation}
    \vz_k\bl{j} = (A_k - A_{t_{k, j}-1})\mFt_k\bl{j}. %\mF\bl{j}(\vy_{k, j}) = (A_k - A_{i_{k, j}-1})\mF\bl{j}(\vx_{i_{k, j} - 1}),
\end{equation}
where $t_{k, j}$ denotes the last iteration up to $k$ in which $j$ was selected by the random sampling procedure. 
A lazy, implementation version of Algorithm \ref{alg:main} is provided in Appendix \ref{appx:lazy-algo}.

\section{Convergence Analysis} 

To analyze the algorithm, we directly bound $\Gap(\vxb_k, \vx)$ for an arbitrary but fixed $\vx \in \dom(g),$ where $\vxb_k := \frac{1}{A_k}\sum_{i=1}^k a_i \vx_i$. The argument is constructive and at a high level follows the line of work \cite{diakonikolas2019approximate,diakonikolas2021efficient,diakonikolas2018accelerated,cohen2018acceleration,diakonikolas2018alternating,diakonikolas2020locally,diakonikolas2020fair,diakonikolas2024complementary,chakrabarti2023block,lin2023accelerated} originating with the approximate duality gap technique framework \cite{diakonikolas2019approximate}, which is in turn closely related to Nesterov's estimating sequences \cite{nesterov2005smooth}. The basic idea is to create an upper estimate on the notion of a gap (which for us is defined in \eqref{eq:gap-def}) and argue that it reduces at a rate $1/A_k,$ while the specifics of the gap estimate construction differ between algorithms and considered optimization settings. We carry out the main convergence argument in Section \ref{sec:gap-est-conv-analysis} and discuss choices of sampling probabilities and resulting complexity bounds in Section \ref{sec:sampling-distributions}. 

\subsection{Gap Estimate Construction and Main Convergence Result}\label{sec:gap-est-conv-analysis}

We start our convergence analysis with the following lemma, which defines the error sequence used in bounding $\Gap(\vxb_k, \vx).$ Observe that this lemma is generic in the sense that it does not depend on our choice of $\mFh_k$. We can view it as a constructive approach to bounding the gap function, which in turn motivates our choice of $\mFh_k$ as a means for controlling the error sequence $E_i,$ $i \geq 1,$ defined in the lemma. By convention (same as in Algorithm \ref{alg:main}), we take $a_0 = A_0 = 0.$

\begin{lemma}[A Generic Gap Bound]\label{lemma:gap-bnd}
    Consider algorithm updates specified in \eqref{eq:alg-iteration} and let $\vxb_k = \frac{1}{A_k}\sum_{i=1}^k a_i \vx_i$ for $k \geq 1$. Then, for any $k \geq 1$ and any $\vx \in \dom(g),$
    \begin{equation}\notag
        \Gap(\vxb_k, \vx) \leq \frac{\sum_{i=1}^k E_i + D(\vx, \vx_0) - \frac{A_k \gamma + 1}{2}\|\vx - \vx_k\|^2}{A_k},
    \end{equation}
    where
    \begin{equation}\notag
        E_i := a_i\big(\innp{\mF(\vx_i) - \mFh_{i}, \vx_i - \vx} - \frac{A_{i-1}\gamma + 1}{2}\|\vx_i - \vx_{i-1}\|^2,\quad i \geq 1.
    \end{equation}
\end{lemma}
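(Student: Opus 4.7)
The plan is to prove this via the approximate duality gap / estimating sequences strategy cited just before the lemma. Denote $\phi_k(\vu) := \sum_{i=1}^k a_i\bigl(\innp{\mFh_i, \vu} + g(\vu)\bigr) + D(\vu, \vx_0)$, so that the update in \eqref{eq:alg-iteration} reads $\vx_k = \argmin_\vu \phi_k(\vu)$ and $\phi_0 \equiv D(\cdot, \vx_0)$. Since $D(\cdot, \vx_0)$ is $1$-strongly convex with respect to $\|\cdot\|$ and $g$ is $\gamma$-strongly convex, the function $\phi_k$ is $(A_k\gamma + 1)$-strongly convex with respect to $\|\cdot\|$. This is the only structural fact from the algorithm that will be used.

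First I would upper-bound $A_k\,\Gap(\vxb_k, \vx)$ in terms of a sum along the trajectory. Expanding the definition, using $\vxb_k = \frac{1}{A_k}\sum_{i=1}^k a_i \vx_i$, and applying convexity of $g$ (Jensen) to $g(\vxb_k)$ followed by monotonicity of $\mF$ in the form $\innp{\mF(\vx), \vx_i - \vx} \leq \innp{\mF(\vx_i), \vx_i - \vx}$ for each $i$, one obtains
\begin{equation*}
A_k\, \Gap(\vxb_k, \vx) \;\leq\; \sum_{i=1}^k a_i\, \innp{\mF(\vx_i) - \mFh_i,\, \vx_i - \vx} \;+\; \sum_{i=1}^k a_i\bigl(\innp{\mFh_i, \vx_i - \vx} + g(\vx_i) - g(\vx)\bigr).
\end{equation*}
The first sum is exactly $\sum_i a_i \innp{\mF(\vx_i) - \mFh_i, \vx_i - \vx}$, which will become the ``noise'' part of $E_i$.

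Next I would handle the second sum by a telescoping argument using the optimality and strong convexity of $\phi_i$. Since $\vx_{i-1} = \argmin \phi_{i-1}$ and $\phi_{i-1}$ is $(A_{i-1}\gamma + 1)$-strongly convex, we have $\phi_{i-1}(\vx_i) \geq \phi_{i-1}(\vx_{i-1}) + \tfrac{A_{i-1}\gamma + 1}{2}\|\vx_i - \vx_{i-1}\|^2$. Subtracting this from the identity $\phi_i(\vx_i) = \phi_{i-1}(\vx_i) + a_i(\innp{\mFh_i, \vx_i} + g(\vx_i))$ rearranges to
\begin{equation*}
a_i\bigl(\innp{\mFh_i, \vx_i} + g(\vx_i)\bigr) \;\leq\; \phi_i(\vx_i) - \phi_{i-1}(\vx_{i-1}) - \tfrac{A_{i-1}\gamma + 1}{2}\|\vx_i - \vx_{i-1}\|^2.
\end{equation*}
Summing over $i = 1,\dots, k$ telescopes to a bound by $\phi_k(\vx_k) - \phi_0(\vx_0) - \sum_i \tfrac{A_{i-1}\gamma + 1}{2}\|\vx_i - \vx_{i-1}\|^2$, and $\phi_0(\vx_0) = 0$. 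Then applying strong convexity of $\phi_k$ at its minimizer gives $\phi_k(\vx_k) \leq \phi_k(\vx) - \tfrac{A_k\gamma + 1}{2}\|\vx - \vx_k\|^2$, and unfolding $\phi_k(\vx) = \sum_i a_i \innp{\mFh_i, \vx} + A_k g(\vx) + D(\vx, \vx_0)$ converts the bound into the desired form for the second sum.

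Combining the two estimates and dividing by $A_k$ yields exactly the stated inequality, with $E_i$ emerging as the sum of the monotonicity-noise term $a_i \innp{\mF(\vx_i) - \mFh_i, \vx_i - \vx}$ and the negative ``progress'' term $-\tfrac{A_{i-1}\gamma + 1}{2}\|\vx_i - \vx_{i-1}\|^2$. There is no serious obstacle here; this is essentially bookkeeping within the estimating-sequences template. The only subtle point to be careful about is consistency of strong-convexity parameters between consecutive $\phi_i$'s (and specifically that the $A_{i-1}$-indexed, not $A_i$-indexed, parameter shows up in the progress term, which matches the recurrence via $A_i = A_{i-1} + a_i$). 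The argument is entirely deterministic and generic in $\mFh_k$, as the lemma advertises, so no probabilistic considerations are needed at this stage.
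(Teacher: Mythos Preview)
Your proposal is correct and follows essentially the same approach as the paper's proof: both define the estimating-sequence function $\phi_k$ (yours differs from the paper's only by additive constants), use its $(A_k\gamma+1)$-strong convexity together with the optimality of $\vx_k$, and telescope the recursion $\phi_k(\vx_k) = \phi_{k-1}(\vx_k)$ (in the paper's normalization) to extract the $\tfrac{A_{i-1}\gamma+1}{2}\|\vx_i-\vx_{i-1}\|^2$ terms. The only cosmetic difference is that the paper lower-bounds $\phi_k(\vx)$ directly while you upper-bound $\phi_k(\vx_k)$ and subtract, which is the same inequality read in the other direction.
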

\begin{proof}
    By monotonicity of $\mF$ and  convexity of $g,$ we have
    \begin{equation}\label{eq:gap-1}
        \Gap(\vxb_k, \vx) \leq \frac{1}{A_k}\sum_{i=1}^k a_i\big(\innp{\mF(\vx_i), \vx_i - \vx} + g(\vx_i) - g(\vx)\big).
    \end{equation}
    Adding and subtracting $\frac{1}{A_k}\sum_{i=1}^k a_i \innp{\mFh_i, \vx_i - \vx} - \frac{1}{A_k}D(\vx, \vx_0)$ in the last inequality, we now have
    \begin{equation}\label{eq:gap-2}
    \begin{aligned}
        \Gap(\vxb_k, \vx) \leq\; & \frac{1}{A_k}\sum_{i=1}^k a_i\big(\innp{\mFh_i, \vx_i - \vx} + g(\vx_i) - g(\vx)\big) - \frac{1}{A_k}D(\vx, \vx_0)\\
        &+ \frac{1}{A_k}\sum_{i=1}^k a_i\big(\innp{\mF(\vx_i) - \mFh_i, \vx_i - \vx} + \frac{1}{A_k}D(\vx, \vx_0).
    \end{aligned}
    \end{equation}
    Define $\phi_k(\vx) := -\sum_{i=1}^k a_i\big(\innp{\mFh_i, \vx_i - \vx} + g(\vx_i) - g(\vx)\big) +  D(\vx, \vx_0)$. This function is $(A_k \gamma + 1)$-strongly convex, as the Bregman divergence term $D(\vx, \vx_0)$ is 1-strongly convex and $g$ is $\gamma$-strongly convex. Looking back at \eqref{eq:alg-iteration}, we also observe that $\phi_k(\vx)$ is minimized by $\vx_k.$ Hence, we can conclude that
    \begin{equation}\label{eq:est-seq-1}
    \begin{aligned}
        \phi_k(\vx) &\geq \phi_k(\vx_k) + \frac{A_k \gamma + 1}{2}\|\vx - \vx_k\|^2\\
        &= \phi_{k-1}(\vx_k) + \frac{A_k \gamma + 1}{2}\|\vx - \vx_k\|^2.
    \end{aligned}
    \end{equation}
    Observing again that $\phi_{k-1}$ is $(A_{k-1}\gamma + 1)$-strongly convex and minimized by $\vx_{k-1},$ we get 
    $$
    \phi_{k-1}(\vx_k) \geq \phi_{k-1}(\vx_{k-1}) + \frac{A_{k-1}\gamma + 1}{2}\|\vx_k - \vx_{k-1}\|^2 = \phi_{k-2}(\vx_{k-1}) + \frac{A_{k-1}\gamma + 1}{2}\|\vx_k - \vx_{k-1}\|^2.
    $$
    Combining with \eqref{eq:est-seq-1} and unrolling the recursion, we have
    \begin{equation}\notag
        \phi_k(\vx) \geq \frac{A_k \gamma + 1}{2}\|\vx - \vx_k\|^2 + \sum_{i=1}^k \frac{A_{i-1} \gamma + 1}{2}\|\vx_i - \vx_{i-1}\|^2.
    \end{equation}
    Plugging the last inequality back into \eqref{eq:gap-2}, we now have
    \begin{equation}\label{eq:gap-3}
        \begin{aligned}
            \Gap(\vxb_k, \vx) \leq \; & -\frac{A_k \gamma + 1}{2A_k}\|\vx - \vx_k\|^2 - \sum_{i=1}^k \frac{A_{i-1} \gamma + 1}{2A_k}\|\vx_i - \vx_{i-1}\|^2\\
            &+ \frac{1}{A_k}\sum_{i=1}^k a_i\big(\innp{\mF(\vx_i) - \mFh_i, \vx_i - \vx} + \frac{1}{A_k}D(\vx, \vx_0).
        \end{aligned}
    \end{equation}
    To complete the proof, it remains to combine \eqref{eq:gap-3} with the definition of $E_i,$ and rearrange. 
\end{proof}

The rest of the convergence proof is algorithm-specific and is carried out by controlling the error sequence to ensure each error term $E_i$ only consists of terms that are either non-positive or telescoping, in expectation. This is the crux of the convergence analysis, specific to our work. We first  prove the following auxiliary lemma, which can be interpreted as a recursive variance bound for the stochastic estimates $\mFt_k$.
\begin{lemma}\label{claim:rec-var}
    For any $i \geq 1$ and any $j \in \{1, 2, \dots, m\},$ we have
\begin{equation}\label{eq:variance-bound}
    \E\big[\|\mF_j(\vx_{i}) - \mFt_{i-1, j}\|_*^2 \big] \leq \frac{5}{q_j}\sum_{i'=1}^i (1 - q_j/2)^{i - i'} \E\big[\|\mF_j(\vx_{i'}) - \mF_j(\vx_{i'-1})\|_*^2\big]. 
\end{equation}
\end{lemma}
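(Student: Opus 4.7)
The plan is to obtain a one-step variance recursion and then unroll it. To produce the one-step bound, I would apply Young's inequality in the form $\|\va + \vb\|_*^2 \leq (1+\alpha)\|\va\|_*^2 + (1 + 1/\alpha)\|\vb\|_*^2$ (with a parameter $\alpha > 0$ to be chosen) to the splitting $\mF_j(\vx_i) - \mFt_{i-1, j} = (\mF_j(\vx_{i-1}) - \mFt_{i-1, j}) + (\mF_j(\vx_i) - \mF_j(\vx_{i-1}))$, yielding
$$\|\mF_j(\vx_i) - \mFt_{i-1, j}\|_*^2 \leq (1+\alpha) \|\mF_j(\vx_{i-1}) - \mFt_{i-1, j}\|_*^2 + (1+1/\alpha)\|\mF_j(\vx_i) - \mF_j(\vx_{i-1})\|_*^2.$$

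The key observation is then that, by the table update in Line 10 of Algorithm \ref{alg:main}, conditionally on $\vx_{i-1}$ and $\mFt_{i-2, j}$ the entry $\mFt_{i-1, j}$ equals $\mF_j(\vx_{i-1})$ with probability $q_j$ (when $j'_{i-1} = j$) and equals $\mFt_{i-2, j}$ with probability $1 - q_j$, since $j'_{i-1}$ is sampled independently of everything preceding. Taking the conditional expectation of the first term on the right-hand side thus produces $(1 - q_j)\|\mF_j(\vx_{i-1}) - \mFt_{i-2, j}\|_*^2$, so after taking full expectations one obtains the scalar recursion
$$\E\big[\|\mF_j(\vx_i) - \mFt_{i-1, j}\|_*^2\big] \leq (1+\alpha)(1-q_j)\, \E\big[\|\mF_j(\vx_{i-1}) - \mFt_{i-2, j}\|_*^2\big] + (1 + 1/\alpha)\,\E\big[\|\mF_j(\vx_i) - \mF_j(\vx_{i-1})\|_*^2\big].$$

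To close the argument, I would choose $\alpha = q_j/2$, so that $(1 + \alpha)(1 - q_j) = 1 - q_j/2 - q_j^2/2 \leq 1 - q_j/2$ and $1 + 1/\alpha = 1 + 2/q_j \leq 3/q_j \leq 5/q_j$ (using $q_j \leq 1$). Unrolling the resulting one-step recursion from $i$ down to $i = 1$, with the base case handled directly by the initialization $\mFt_{0, j} = \mF_j(\vx_0)$ so that $\|\mF_j(\vx_1) - \mFt_{0, j}\|_*^2 = \|\mF_j(\vx_1) - \mF_j(\vx_0)\|_*^2$ gets absorbed as the $i' = 1$ term of the target sum (whose coefficient $(5/q_j)(1-q_j/2)^{i-1}$ is at least $1$), then yields the claim. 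The argument is essentially routine; the only mildly delicate point is to peel off the $j'_{i-1}$-randomness in the correct order, namely after conditioning on $\vx_{i-1}$ and $\mFt_{i-2, j}$ but before averaging over $\vx_i$ and earlier iterates, which is clean because the $j'$-samples are drawn independently of the rest of the process.
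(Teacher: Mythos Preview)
Your proposal is correct and follows essentially the same route as the paper's proof: split via Young's inequality, use the update rule in Line~10 together with independence of $j'_{i-1}$ to contract the previous variance by a factor $1-q_j$, and unroll using the initialization $\mFt_{0,j}=\mF_j(\vx_0)$. The only cosmetic difference is the Young parameter (you take $\alpha=q_j/2$, the paper takes $q_j/4$), which yields the slightly sharper $3/q_j$ in place of $5/q_j$ but otherwise matches the paper's argument line by line.
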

    \begin{proof}
    By Young's inequality, we have that for any $\beta > 0,$
    \begin{equation}\label{eq:rec-young}
        \|\mF_j(\vx_{i}) - \mFt_{i-1, j}\|_*^2 \leq \Big(1 + \frac{1}{\beta}\Big)\|\mF_j(\vx_{i}) - \mF_j(\vx_{i-1})\|_*^2 + (1 + \beta)\|\mF_j(\vx_{i-1}) - \mFt_{i-1, j}\|_*^2.
    \end{equation}
    Let $\cF_{i+}$ denote the natural filtration, containing all randomness in the algorithm up to, but excluding the random choice of $j_i'$ ($\cF_{i+}$ however inludes the random choice of $j_i$). 
    By the tower property of expectation,
        \begin{equation}\label{eq:rec-tower}
            \E\big[\|\mF_j(\vx_{i}) - \mFt_{i, j}\|_*^2 \big] = \E\big[\E\big[\|\mF_j(\vx_{i}) - \mFt_{i, j}\|_*^2 | \cF_{i+} \big]\big].
        \end{equation}
     Observe that conditioned on $\cF_{i+}$, the only source of randomness is the random choice of $j_i'.$ With probability $q_j$, we have that $j_i' = j,$ in which case $\mFt_{i, j} = \mF_j(\vx_{i})$, while with the remaining probability,  $\mFt_{i, j} = \mFt_{i-1, j}$. Plugging into \eqref{eq:rec-tower} and simplifying, we get
     \begin{equation}\label{eq:rec-conditioning}
         \E\big[\|\mF_j(\vx_{i}) - \mFt_{i, j}\|_*^2 \big] = (1 - q_j)\E\big[\|\mF_j(\vx_{i}) - \mFt_{i-1, j}\|_*^2\big]. 
     \end{equation}
     Combining \eqref{eq:rec-young} and \eqref{eq:rec-conditioning}, we reach the following recursive inequality:
     \begin{equation}\notag%\label{eq:recursive-variance}
         \begin{aligned}
             \E\big[\|\mF_j(\vx_{i}) - \mFt_{i-1,j}\|_*^2 \big] \leq \; & \Big(1 + \frac{1}{\beta}\Big)\E\big[\|\mF_j(\vx_{i}) - \mF_j(\vx_{i-1})\|_*^2\big]\\
             &+ (1 + \beta)(1-q_j)\E\big[\|\mF_j(\vx_{i-1}) - \mFt_{i-2, j}\|_*^2\big]. 
         \end{aligned}
     \end{equation}
     In particular, choosing $\beta = q_j/4$ and recalling that it must be $q_j \in (0, 1),$ we get
     \begin{equation}\label{eq:recursive-variance}
         \begin{aligned}
             \E\big[\|\mF_j(\vx_{i}) - \mFt_{i-1, j}\|_*^2 \big] \leq \; & \frac{5}{q_j}\E\big[\|\mF_j(\vx_{i}) - \mF_j(\vx_{i-1})\|_*^2\big] + (1-q_j/2)\E\big[\|\mF_j(\vx_{i-1}) - \mFt_{i-2, j}\|_*^2\big]. 
         \end{aligned}
     \end{equation}
     To complete the proof, it remains to apply \eqref{eq:recursive-variance} recursively and recal that $\mFt_0 = \mF(\vx_0)$.  
    \end{proof}

In the next lemma, we show how to bound the individual error terms $E_i,$ which, combined with Lemma~\ref{lemma:gap-bnd} will lead to the desired convergence result.

\begin{lemma}\label{lemma:error-bnd}
    Let $E_i$ be defined as in Lemma~\ref{lemma:gap-bnd}. Then, for any $i \geq 1$ and any $\vx$ (possibly dependent on the algorithm randomness), we have
    \begin{equation}\notag
    \begin{aligned}
       \E [E_i] \leq\; & \E\big[a_i \innp{\mF(\vx_i) - \mFt_{i-1}, \vx_i - \vx} - a_{i-1}\innp{\mF(\vx_{i-1}) - \mFt_{i-2}, \vx_{i-1} - \vx}\big]\\
       &+ \frac{5{a_{i-1}}^2}{A_{i-1} \gamma + 1}\sum_{j=1}^m\sum_{i'=1}^{i-1} \frac{(1 - \frac{q_j}{2})^{i -1 - i'}}{{p_{j}}q_j} \E\big[\|\mF_j(\vx_{i'}) - \mF_j(\vx_{i'-1})\|_*^2\big]\\
       &- \frac{A_{i-1} \gamma + 1}{4}\E[\|\vx_i - \vx_{i-1}\|^2]
       \\
       &+ \E\Big[a_{i-1}\Big\langle{\frac{1}{p_{j_i}}(\mF_{j_i}(\vx_{i-1}) - \mFt_{i-2, j_i})-(\mF(\vx_{i-1}) - \mFt_{i-2}), \vx}\Big\rangle\Big].
    \end{aligned}
    \end{equation}
    where the expectation is taken w.r.t.\ all the randomness in the algorithm.
\end{lemma}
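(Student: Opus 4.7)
The plan is to substitute the definition of $\mFh_i$ from Line 6 of Algorithm~\ref{alg:main} into $E_i$, and then separate the resulting expression into three groups: (i) a ``clean'' pair of inner products in the desired telescoping form, (ii) a cross term to be handled by Young's inequality, and (iii) a residual ``bias-like'' term that persists because $\vx$ may depend on $j_i$. Using $a_i \mFh_i = a_i \mFt_{i-1} + \tfrac{a_{i-1}}{p_{j_i}}(\mF_{j_i}(\vx_{i-1}) - \mFt_{i-2,j_i})$, I would first write
\begin{equation}\notag
E_i = a_i\innp{\mF(\vx_i)-\mFt_{i-1},\vx_i-\vx} - \tfrac{a_{i-1}}{p_{j_i}}\innp{\mF_{j_i}(\vx_{i-1})-\mFt_{i-2,j_i},\vx_i-\vx} - \tfrac{A_{i-1}\gamma+1}{2}\|\vx_i-\vx_{i-1}\|^2.
\end{equation}
The first term already matches the first term of the bound, so the work is in the second and third terms.

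Next I would split $\vx_i - \vx = (\vx_i - \vx_{i-1}) + (\vx_{i-1} - \vx)$ inside the middle inner product. For the $\vx_i - \vx_{i-1}$ portion, I would apply Fenchel--Young with parameter matched to the quadratic term: $|\innp{\vu,\vv}| \le \tfrac{\|\vu\|_*^2}{A_{i-1}\gamma+1} + \tfrac{A_{i-1}\gamma+1}{4}\|\vv\|^2$, producing the quadratic penalty $\tfrac{A_{i-1}\gamma+1}{4}\|\vx_i-\vx_{i-1}\|^2$ that, combined with the existing $-\tfrac{A_{i-1}\gamma+1}{2}\|\vx_i-\vx_{i-1}\|^2$, yields exactly the third term of the bound, and leaves a stochastic variance proxy $\tfrac{a_{i-1}^2}{(A_{i-1}\gamma+1)p_{j_i}^2}\|\mF_{j_i}(\vx_{i-1})-\mFt_{i-2,j_i}\|_*^2$. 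Taking expectation over $j_i$ conditioned on $\cF_{i-1}$ converts $\E_{j_i}[\tfrac{1}{p_{j_i}^2}\|\mF_{j_i}(\vx_{i-1})-\mFt_{i-2,j_i}\|_*^2] = \sum_{j=1}^m \tfrac{1}{p_j}\|\mF_j(\vx_{i-1})-\mFt_{i-2,j}\|_*^2$. Applying Lemma~\ref{claim:rec-var} at index $i-1$ to each summand then yields precisely the geometric-sum bound in the second term of the statement.

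For the $\vx_{i-1} - \vx$ portion, the key identity is that, conditional on $\cF_{i-1}$ (which includes $\vx_{i-1}$ but not $j_i$), $\E_{j_i}[\tfrac{1}{p_{j_i}}(\mF_{j_i}(\vx_{i-1}) - \mFt_{i-2,j_i})] = \mF(\vx_{i-1}) - \mFt_{i-2}$. Writing
\begin{equation}\notag
-\tfrac{a_{i-1}}{p_{j_i}}\innp{\mF_{j_i}(\vx_{i-1})-\mFt_{i-2,j_i},\vx_{i-1}-\vx} = -a_{i-1}\innp{\mF(\vx_{i-1})-\mFt_{i-2},\vx_{i-1}-\vx} -a_{i-1}\innp{\tfrac{1}{p_{j_i}}(\mF_{j_i}(\vx_{i-1})-\mFt_{i-2,j_i})-(\mF(\vx_{i-1})-\mFt_{i-2}),\vx_{i-1}-\vx},
\end{equation}
the inner product of the zero-mean (in $j_i$) quantity against $\vx_{i-1}$ vanishes under $\E[\cdot|\cF_{i-1}]$ because $\vx_{i-1}$ is $\cF_{i-1}$-measurable. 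What remains is the telescoping term $-a_{i-1}\innp{\mF(\vx_{i-1})-\mFt_{i-2},\vx_{i-1}-\vx}$ plus the residual $a_{i-1}\innp{\tfrac{1}{p_{j_i}}(\mF_{j_i}(\vx_{i-1})-\mFt_{i-2,j_i})-(\mF(\vx_{i-1})-\mFt_{i-2}),\vx}$, which are exactly the two remaining terms in the stated bound.

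The main conceptual obstacle is the last step: correctly tracking what does and does not vanish when $\vx$ is allowed to depend on algorithm randomness. If $\vx$ were deterministic or $\cF_{i-1}$-measurable, the residual would drop out, but since the lemma is meant to be applied with $\vx$ possibly correlated with $j_i$ (this will matter later, e.g.\ when choosing $\vx$ as a maximizer), the residual must be carried forward and handled separately. All other steps are routine Young/Cauchy--Schwarz and direct application of Lemma~\ref{claim:rec-var}; the bookkeeping around what is conditioned on what is the only delicate piece.
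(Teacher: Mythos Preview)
Your proposal is correct and follows essentially the same route as the paper: substitute $\mFh_i$, split the inner product via $\vx_i - \vx = (\vx_i - \vx_{i-1}) + (\vx_{i-1} - \vx)$, apply Young's inequality with parameter $\tfrac{A_{i-1}\gamma+1}{2}$ to the first piece, add and subtract $a_{i-1}\innp{\mF(\vx_{i-1})-\mFt_{i-2},\vx_{i-1}-\vx}$ for the second, take conditional expectation over $j_i$ so the $\vx_{i-1}$ part of the residual vanishes, and finish with Lemma~\ref{claim:rec-var}. The only cosmetic difference is that the paper denotes the relevant $\sigma$-algebra by $\cF_i$ (randomness up to but not including iteration $i$) rather than $\cF_{i-1}$, but your description of what it contains is the right one.
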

\begin{proof}
    Recalling the definitions of $E_i$ and $\mFh_i,$ we have
    \begin{equation}\label{eq:error-1}
        \begin{aligned}
            E_i =\; & a_i \innp{\mF(\vx_i) - \mFt_{i-1}, \vx_i - \vx} - \frac{a_{i-1}}{p_{j_i}}\innp{\mF_{j_i}(\vx_{i-1}) - \mFt_{i-2, j_i}, \vx_{i-1} - \vx}\\
            &+ \frac{a_{i-1}}{p_{j_i}} \innp{\mF_{j_i}(\vx_{i-1}) - \mFt_{i-2, j_i}, \vx_{i-1} - \vx_i} - \frac{A_{i-1}\gamma + 1}{2}\|\vx_i - \vx_{i-1}\|^2.
        \end{aligned}
    \end{equation}
    By Young's inequality, for any $\alpha_i > 0,$
    \begin{equation}\notag
        \frac{a_{i-1}}{p_{j_i}} \innp{\mF_{j_i}(\vx_{i-1}) - \mFt_{i-2, j_i}, \vx_{i-1} - \vx_i} \leq \frac{{a_{i-1}}^2}{2{p_{j_i}}^2 \alpha_i}\|\mF_{j_i}(\vx_{i-1}) - \mFt_{i-2, j_i}\|_*^2 + \frac{\alpha_i}{2}\|\vx_{i-1} - \vx_i\|^2. 
    \end{equation}
    Hence, choosing $\alpha_i = \frac{A_{i-1} \gamma + 1}{2}$, combining with \eqref{eq:error-1}, and adding and subtracing $a_{i-1}\innp{\mF(\vx_{i-1}) - \mFt_{i-2}, \vx_{i-1} - \vx}$, we have
    \begin{equation}\label{eq:error-2}
        \begin{aligned}
            E_i \leq \; & a_i \innp{\mF(\vx_i) - \mFt_{i-1}, \vx_i - \vx} - a_{i-1}\innp{\mF(\vx_{i-1}) - \mFt_{i-2}, \vx_{i-1} - \vx}\\
            &+ \frac{{a_{i-1}}^2}{{p_{j_i}}^2 (A_{i-1} \gamma + 1)}\|\mF_{j_i}(\vx_{i-1}) - \mFt_{i-2, j_i}\|_*^2
            - \frac{A_{i-1}\gamma + 1}{4}\|\vx_i - \vx_{i-1}\|^2\\
            &- a_{i-1} \Big\langle{\frac{1}{p_{j_i}}(\mF_{j_i}(\vx_{i-1}) - \mFt_{i-2, j_i}) - (\mF(\vx_{i-1}) - \mFt_{i-2}), \vx_{i-1} - \vx}\Big\rangle.
        \end{aligned}
    \end{equation}
    Let $\cF_i$ denote the natural filtration induced by the algorithm randomness up to, but not including, iteration $i.$ As is standard, we bound the expectation conditioned on $\cF_i$ and then use the tower property of expectation by which $\E[\cdot] = \E[\E[\cdot|\cF_i]].$ 
    Since $j_i$ is drawn independently of the history, we have that
    \begin{equation}\label{eq:in-prod-term-Ei}
    \begin{aligned}
         \E\Big[\frac{a_{i-1}}{p_{j_i}}\innp{\mF_{j_i}(\vx_{i-1}) - \mFt_{i-2, j_i}, \vx_{i-1}}\Big|\cF_i\Big]   
         &= a_{i-1}\innp{\mF(\vx_{i-1}) - \mFt_{i-2}, \vx_{i-1}}. 
    \end{aligned}
    \end{equation}
Thus, in the rest of the proof, we focus on bounding $\E[\frac{{a_{i-1}}^2}{2{p_{j_i}}^2 \alpha_i}\|\mF_{j_i}(\vx_{i-1}) - \mFt_{i-2, j_i}\|_*^2|\cF_i],$ where $\alpha_i = \frac{A_i \gamma + 1}{2}.$ Again, by independence of the random choice of $j_i$ from the history, we can write:
\begin{align}
    \E\Big[\frac{{a_{i-1}}^2}{2{p_{j_i}}^2 \alpha_i}\|\mF_{j_i}(\vx_{i-1}) - \mFt_{i-2, j_i}\|_*^2\Big] =\; & 
    \E\Big[\E\Big[\frac{{a_{i-1}}^2}{2{p_{j_i}}^2 \alpha_i}\|\mF_{j_i}(\vx_{i-1}) - \mFt_{i-2, j_i}\|_*^2|\cF_i\Big]\Big]\notag \\
    =\; &   \sum_{j=1}^m\frac{{a_{i-1}}^2}{2{p_{j}} \alpha_i}\E\big[\|\mF_{j}(\vx_{i-1}) - \mFt_{i-2, j}\|_*^2 \big]. \label{eq:error-3}
\end{align}
To complete the proof of the lemma, it remains to combine \eqref{eq:error-2}--\eqref{eq:error-3} with Lemma \ref{claim:rec-var}. 
\end{proof}

\begin{theorem}[Main Theorem]\label{thm:main}
    Consider iterates $\vx_k$ of Algorithm \ref{alg:main} and let $\vxb_k = \frac{1}{A_k}\sum_{i=1}^k a_i \vx_i$ for $k \geq 1$. Let $j_* = \argmin_{1 \leq j \leq m} q_j.$ If $\frac{{a_i}^2}{A_i \gamma + 1} \leq (1 + q_{j_*}/5)\frac{{a_{i-1}}^2}{A_{i-1} \gamma + 1}$ and $\frac{25 {\Lpq}^2 {a_{i-1}}^2}{A_{i-1} \gamma + 1} \leq  \frac{A_{i-2}\gamma + 1}{4}$ for all $i \geq 2$, then we have the following results after $k \leq K$ iterations of the algorithm. For any $\vx \in \dom(g)$ that is independent of the randomness of the algorithm, we have 
    \begin{equation}\notag
        \E[\Gap(\vxb_k, \vx)] \leq \frac{D(\vx, \vx_0) - \frac{A_k\gamma + 1}{4}\E[\|\vx - \vx_k\|^2]}{A_k},
    \end{equation}
    where the expectation is w.r.t.\ all the randomness in the algorithm (random choices $j_i, j_i'\,$ for $i \in \{1, \dots, K\}$). In particular, for $\gamma \geq 0$, if $\vx^*$ is a solution to \eqref{eq:main-problem}, then
    \begin{equation}\notag
       \E[\|\vx^* - \vx_k\|^2] \leq \frac{4 D(\vx^*, \vx_0)}{A_k \gamma + 1}.
    \end{equation}
    In the $\gamma = 0$ case, if $\frac{75 {\Lpq}^2 {a_i}^2}{2} - \frac{1}{4} \leq 0$ for $i \geq 1,$ then we further have 
    \begin{equation}\notag
        \E\Big[\sup_{\vx \in \dom(g)}\Gap(\vxb_k, \vx)\Big] \leq \frac{2 \sup_{\vy \in \dom(g)} D(\vy, \vx_0) }{A_k}. 
    \end{equation}
    All the conditions on the step sizes can be satisfied with $a_1 = A_1 = \sqrt{\frac{2}{3}}\frac{1}{10 \Lpq}$ and $A_k = A_1 \max\Big\{k, \, \big(1 + \min\big\{\frac{q_{j_*}}{11}, \frac{\gamma}{10 {\Lpq}}\}\big)^{k-1}\Big\}$. 
\end{theorem}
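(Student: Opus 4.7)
My plan is to combine Lemmas~\ref{lemma:gap-bnd}, \ref{claim:rec-var}, and~\ref{lemma:error-bnd} with the step size conditions in a telescoping argument in expectation: the accumulated variance from the Lemma~\ref{lemma:error-bnd} error bounds is driven into the negative quadratic terms produced by Bregman strong convexity, yielding the first gap bound; the distance bound follows by specializing to $\vx = \vx^*$, and the supremum bound requires a ghost-iterate argument to handle the dependence of $\vx$ on the algorithm randomness.

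Starting from Lemma~\ref{lemma:gap-bnd} and substituting $\E[E_i]$ from Lemma~\ref{lemma:error-bnd}, the pair $a_i \innp{\mF(\vx_i) - \mFt_{i-1}, \vx_i - \vx} - a_{i-1}\innp{\mF(\vx_{i-1}) - \mFt_{i-2}, \vx_{i-1} - \vx}$ telescopes over $i = 1, \dots, k$ into the single boundary term $a_k\innp{\mF(\vx_k) - \mFt_{k-1}, \vx_k - \vx}$ (using $a_0 = 0$), and the stochastic bias term involving $\vw_i := \frac{1}{p_{j_i}}(\mF_{j_i}(\vx_{i-1})-\mFt_{i-2,j_i}) - (\mF(\vx_{i-1})-\mFt_{i-2})$ drops out because it has zero conditional expectation given $\cF_i$ whenever $\vx$ is independent of the algorithm randomness. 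For the variance double sum I would swap summation order, iterate the first step size condition to obtain $\frac{a_{i-1}^2}{A_{i-1}\gamma+1} \leq (1 + q_{j_*}/5)^{i-1-i'}\frac{a_{i'}^2}{A_{i'}\gamma+1}$, and note that with $q_{j_*} \leq q_j$ the combined ratio satisfies $(1 + q_{j_*}/5)(1-q_j/2) \leq 1 - 3q_j/10$, so the inner geometric series sums to at most $10/(3q_j)$. Using $\sum_j \frac{1}{p_j q_j^2}\|\mF_j(\vx_{i'}) - \mF_j(\vx_{i'-1})\|_*^2 \leq \Lpq^2 \|\vx_{i'} - \vx_{i'-1}\|^2$ from the definition of $\Lpq$, the total variance contribution reduces to at most $\sum_{i'}\frac{50 \Lpq^2 a_{i'}^2}{3(A_{i'}\gamma+1)}\|\vx_{i'}-\vx_{i'-1}\|^2$, and the second step size condition $\frac{25\Lpq^2 a_i^2}{A_i\gamma+1} \leq \frac{A_{i-1}\gamma+1}{4}$ ensures this is at most about $\frac{1}{6}$ of the telescoped negative quadratic $\sum_i \frac{A_{i-1}\gamma+1}{4}\|\vx_i - \vx_{i-1}\|^2$, leaving a non-positive net contribution for $i' \leq k-1$ and a reserved slack at $i' = k$.

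The boundary term $a_k\innp{\mF(\vx_k) - \mFt_{k-1}, \vx_k - \vx}$ is handled by a per-component Young inequality with weights $\beta_j = q_j (A_k\gamma+1)/2$ summing to $(A_k\gamma+1)/2$: this splits off $\frac{A_k\gamma+1}{4}\|\vx_k - \vx\|^2$ (reducing the $-\frac{A_k\gamma+1}{2}$ coefficient from Lemma~\ref{lemma:gap-bnd} to $-\frac{A_k\gamma+1}{4}$) and leaves a residual $\frac{a_k^2}{A_k\gamma+1}\sum_j \frac{1}{q_j}\E[\|\mF_j(\vx_k) - \mFt_{k-1,j}\|_*^2]$, which a further application of Lemma~\ref{claim:rec-var} together with $p_j \leq 1$ bounds in terms of $\Lpq^2$ and more increment norms, again absorbed by the reserved slack. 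This yields the first assertion, and setting $\vx = \vx^*$ with $\Gap(\vxb_k, \vx^*) \geq 0$ (from the definition of $\vx^*$ and monotonicity) gives the distance bound by rearrangement.

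For the $\gamma = 0$ sup-gap assertion the obstacle is that $\vx$ may depend on the algorithm randomness, so the bias term no longer vanishes. I would introduce an auxiliary ``ghost'' dual-averaging iterate $\vyb_k$ driven by the increments $-a_{i-1}\vw_i$, and use the standard identity $\sup_{\vx \in \dom(g)}\{\sum_i a_{i-1}\innp{\vw_i, \vx} - D(\vx, \vx_0)\} \leq \sum_i \frac{a_{i-1}^2}{2}\|\vw_i\|_*^2$ (from $1$-strong convexity of the reference function), then use $\E[\|\vw_i\|_*^2]\leq \Lpq^2$-type variance bounds derived from the definitions of $\vw_i$ and $\Lpq$; the additional condition $\frac{75\Lpq^2 a_i^2}{2} \leq \frac{1}{4}$ absorbs this into the reserved slack at the cost of an overall factor of $2$ in the final bound. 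The step size choices are then verified by direct substitution: for $\gamma = 0$, $a_k = A_1$ is constant and both conditions reduce to numerical inequalities at $A_1 = \sqrt{2/3}/(10\Lpq)$ (e.g., $25\Lpq^2 A_1^2 = 1/6 \leq 1/4$); for the geometric regime $a_k = \eta A_{k-1}$ with $\eta = \min\{q_{j_*}/11, \gamma/(10\Lpq)\}$, the first condition reduces to $(1+\eta)^2 \frac{A_{i-1}\gamma+1}{A_i\gamma+1} \leq 1 + q_{j_*}/5$ (implied by $\eta \leq q_{j_*}/11$) and the second to $\eta \leq \gamma/(10\Lpq)$. The hardest step is the $\sup$ case: the ghost iterate is needed to decouple $\vx$ from the algorithm randomness before any variance bound on $\vw_i$ can be applied, and the bookkeeping of constants must be tight enough that the extra variance term produced by this argument is still within the reserved slack.
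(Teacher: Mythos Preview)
Your proposal is correct and follows essentially the same route as the paper's proof: telescope the inner-product terms from Lemma~\ref{lemma:error-bnd}, swap summation order in the variance double sum, use the first step-size condition to control the geometric series, invoke the definition of $\Lpq$, and absorb everything into the negative quadratic via the second step-size condition; the boundary term is split by a per-component Young inequality and fed back through Lemma~\ref{claim:rec-var}; the sup-gap case uses the ghost-iterate/Bregman-variance identity (which is exactly Lemma~\ref{lemma:variance-bregman} in the appendix). The only cosmetic differences are that the paper uses $p_j$ rather than $q_j$ as the Young weights for the boundary term (so its residual has factor $1/(p_j q_j)$, matching the main variance sum directly, whereas you get $1/q_j^2$ and then invoke $p_j\le 1$), and the paper's geometric-series bound is $5/q_j$ via $(1-q_j/2)(1+q_j/5)\le 1-q_j/5$ rather than your slightly sharper $10/(3q_j)$; both sets of constants fit under the stated step-size conditions.
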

\begin{proof}
    From Lemma~\ref{lemma:error-bnd}, recalling that $a_0 = A_0 = 0,$ we have
    \begin{equation}\label{eq:sum-error}
    \begin{aligned}
        &\sum_{i=1}^k \E[E_i]\\
        \leq \; & \E[a_k \innp{\mF(\vx_k) - \mFt_{k-1}, \vx_k - \vx}\Big]\\
        &+\sum_{i=1}^{k-1}\bigg(\frac{5{a_{i}}^2}{A_{i} \gamma + 1}\sum_{j=1}^m\sum_{i'=1}^{i} \frac{(1 - \frac{q_j}{2})^{i - i'}}{{p_j}q_j} \E\big[\|\mF_{j}(\vx_{i'}) - \mF_{j}(\vx_{i'-1})\|_*^2\big] - \frac{A_{i} \gamma + 1}{4}\E[\|\vx_{i+1} - \vx_{i}\|^2]\bigg)\\
        &+ \sum_{i=1}^{k-1} \E\Big[a_{i}\Big\langle\frac{1}{p_{j_{i+1}}}(\mF_{j_{i+1}}(\vx_{i}) - \mFt_{i-1, j_{i+1}})-(\mF(\vx_{i}) - \mFt_{i-1}), \vx\Big\rangle \Big]. 
    \end{aligned}
    \end{equation}
    For the first term on the right-hand side of \eqref{eq:sum-error}, starting with the sum  decomposition of $\mF, \mFt$, using Young's inequality for each term, and applying Lemma \ref{claim:rec-var}, we have 
    \begin{align}
        &\; \E[a_k \innp{\mF(\vx_k) - \mFt_{k-1}, \vx_k - \vx}]\notag\\
        =\; & \sum_{j=1}^m \E[a_k \innp{\mF_{j}(\vx_k) - \mFt_{k-1, j}, \vx_k - \vx}]\notag\\
        \leq\; & \sum_{j=1}^m \E\Big[\frac{{a_k}^2}{p_j(A_k\gamma + 1)} \|\mF_{j}(\vx_k) - \mFt_{k-1, j}\|_*^2 + \frac{p_j(A_k \gamma + 1)}{4}\|\vx_k - \vx\|^2\Big]\notag\\
        =\; &  \sum_{j=1}^m \E\Big[\frac{{a_k}^2}{p_j(A_k\gamma + 1)} \|\mF_{j}(\vx_k) - \mFt_{k-1, j}\|_*^2\Big] + \E\Big[\frac{A_k \gamma + 1}{4}\|\vx_k - \vx\|^2\Big]\notag\\
        \leq \; & \E\bigg[\frac{5{a_k}^2}{A_k\gamma + 1} \sum_{j=1}^m \sum_{i'=1}^k \frac{(1 - \frac{q_j}{2})^{k - i'}}{p_j q_j} \E\big[\|\mF_{j}(\vx_{i'}) - \mF_{j}(\vx_{i'-1})\|_*^2\big] + \frac{A_k\gamma + 1}{4}\|\vx_k - \vx\|^2\bigg].\label{eq:first-term-bnd}
     \end{align}
    For the third term in \eqref{eq:sum-error}, observe that when $\vx = \vx^*$ (or any point independent from the algorithm randomness), this term is zero. Hence, we can write
    \begin{align}
       E_B:= & \; \sum_{i=1}^k \E\Big[a_{i-1}\Big\langle{\frac{1}{p_{j_i}}(\mF_{j_i}(\vx_{i-1}) - \mFt_{i-1, j_i})-(\mF(\vx_{i-1}) - \mFt_{i-1}), \vx}\Big\rangle \Big]\notag\\
        =&\; \sum_{i=1}^k \E\Big[a_{i-1}\Big\langle{\frac{1}{p_{j_i}}(\mF_{j_i}(\vx_{i-1}) - \mFt_{i-1, j_i})-(\mF(\vx_{i-1}) - \mFt_{i-1}), \vx - \vx_0}\Big\rangle \Big]\notag\\
        =&\; \E\bigg[\Big\langle\sum_{i=1}^k a_{i-1}\Big(\frac{1}{p_{j_i}}(\mF_{j_i}(\vx_{i-1}) - \mFt_{i-1, j_i})-(\mF(\vx_{i-1}) - \mFt_{i-1})\Big), \vx - \vx_0\Big\rangle \bigg]. \label{eq:sup-exp-to-exp-sup-term}
    \end{align}
    Observe further that terms $a_{i-1}\Big(\frac{1}{p_{j_i}}(\mF_{j_i}(\vx_{i-1}) - \mFt_{i-1, j_i})-(\mF(\vx_{i-1}) - \mFt_{i-1})\Big)$ are all mean-zero when conditioned on the filtration $\cF_i$ containing all randomness in the algorithm up to (but not including) iteration $i$. In particular, \eqref{eq:sup-exp-to-exp-sup-term} can be bounded using known arguments; see, e.g., \cite[Lemma 16]{alacaoglu2022stochastic} (see also Lemma \ref{lemma:variance-bregman} in Appendix \ref{appx:variance-bregman}, included for completeness), which when applied $E_B$ leads to
    \begin{align}
            E_B  \leq \; & \sum_{i=1}^k  \E\Big[\frac{{a_{i-1}}^2}{2}\big\|\frac{1}{p_{j_i}}(\mF_{j_i}(\vx_{i-1}) - \mFt_{i-1, j_i}) - (\mF(\vx_{i-1}) - \mFt_{i-1}) \big\|_*^2\Big] + \E\big[D(\vx, \vx_0)\big]\notag\\
            \leq \; & \sum_{i=1}^k  \E\Big[\frac{{a_{i-1}}^2}{2{p_{j_i}}^2}\big\|\mF_{j_i}(\vx_{i-1}) - \mFt_{i-1, j_i} \big\|_*^2\Big] + \E\big[D(\vx, \vx_0)\big]\notag\\
            = \; & \sum_{i=1}^k \sum_{j=1}^m \E\Big[\frac{{a_{i-1}}^2}{2{p_{j}}}\big\|\mF_{j}(\vx_{i-1}) - \mFt_{i-1, j} \big\|_*^2\Big] + \E\big[D(\vx, \vx_0)\big]\notag\\
            \leq\; & \sum_{i=1}^k \sum_{j=1}^m \frac{5{a_{i-1}}^2}{2}\sum_{i'=1}^{i-1}\frac{(1-q_j/2)^{i-1-i'}}{{p_j}q_j}\E\big[\big\|\mF_{j}(\vx_{i'}) - \mF_{j}(\vx_{i'-1}) \big\|_*^2\big] + \E\big[D(\vx, \vx_0)\big], \label{eq:EB-bound}
    \end{align}
    where the second inequality holds by the variance being bounded above by the second moment and the last inequality is by Lemma \ref{claim:rec-var}. 

    \noindent\textbf{Case 1: fixed $\vx.$} Consider first the case in which $\vx$ is independent of the algorithm randomness (in which case, as discussed above, $E_B = 0$). Combining \eqref{eq:sum-error} and \eqref{eq:first-term-bnd} with Lemma~\ref{lemma:gap-bnd}, we have
    \begin{equation}\label{eq:gap-almost-final}
        \begin{aligned}
            A_k \E[\Gap(\vxb_k, \vx)]
            \leq \; & D(\vx, \vx_0) - \frac{A_{k}\gamma + 1}{4}\E[\|\vx - \vx_k\|^2]\\
           & +\sum_{i=1}^{k}\frac{5{a_{i}}^2}{A_{i} \gamma + 1}\sum_{j=1}^m\sum_{i'=1}^{i} \frac{(1 - \frac{q_j}{2})^{i - i'}}{{p_j}q_j} \E\big[\|\mF_{j}(\vx_{i'}) - \mF_{j}(\vx_{i'-1})\|_*^2\big]\\
            &- \sum_{i=1}^k\frac{A_{i-1}\gamma + 1}{4}\E\big[\|\vx_i - \vx_{i-1}\|^2\big]. 
        \end{aligned}
    \end{equation}

Recalling that $a_0 = A_0 = 0$ and exchanging the order of summations, we can further simplify \eqref{eq:gap-almost-final} to
\begin{equation}\label{eq:gap-after-changed-sum-order}
    \begin{aligned}
    A_k \E[\Gap(\vxb_k, \vx)] \leq \; & D(\vx, \vx_0) - \frac{A_{k}\gamma + 1}{4}\E[\|\vx - \vx_k\|^2]\\
    &+\sum_{i'=1}^k \sum_{j=1}^m \frac{1}{p_j q_j}  \E\big[\|\mF_{j}(\vx_{i'}) - \mF_{j}(\vx_{i'-1})\|_*^2\big] \sum_{i=i'}^k \frac{5 {a_i}^2}{A_{i}\gamma + 1}\Big(1 - \frac{q_j}{2}\Big)^{i-i'}  \\
     &- \sum_{i=1}^k\frac{A_{i-1}\gamma + 1}{4}\E\big[\|\vx_i - \vx_{i-1}\|^2\big].
\end{aligned}
\end{equation}
By the theorem assumptions, $\frac{{a_i}^2}{A_{i}\gamma + 1}\leq \min_{1 \leq j \leq m} (1 + q_j/5)\frac{{a_{i-1}}^2}{A_{i-1}\gamma + 1}$ for all $i \geq 2,$ thus 
\begin{equation}\label{eq:aux-sum}
    \sum_{i=i'}^k \frac{5 {a_i}^2}{A_{i}\gamma + 1}\Big(1 - \frac{q_j}{2}\Big)^{i-i'} \leq \frac{25}{q_j}\frac{{a_{i'}}^2}{A_{i'}\gamma + 1}, 
\end{equation}
where we used $(1 - q_j/2)(1 + q_j/5) \leq (1-q_j/5),$ as $q_j \leq 1$ for all $j$, and $\sum_{i = i'}^k (1 - q_j/5)^{i - i'} \leq \frac{5}{q_j}.$ 

Combining \eqref{eq:aux-sum} with \eqref{eq:gap-after-changed-sum-order} and recalling the definition of $\Lpq$, we get
\begin{align*}
    A_k \E[\Gap(\vxb_k, \vx)] \leq \; & D(\vx, \vx_0) - \frac{A_{k}\gamma + 1}{4}\E[\|\vx - \vx_k\|^2]\\
     &+\sum_{i=1}^k\Big(\frac{25 {\Lpq}^2 {a_i}^2}{A_i \gamma + 1} - \frac{A_{i-1}\gamma + 1}{4}\Big)\E\big[\|\vx_i - \vx_{i-1}\|^2\big]\\
     \leq\; & D(\vx, \vx_0) - \frac{A_{k}\gamma + 1}{4}\E[\|\vx - \vx_k\|^2],
\end{align*}
as $\frac{25 {\Lpq}^2 {a_i}^2}{A_i \gamma + 1} - \frac{A_{i-1}\gamma + 1}{4} \leq 0$ by the theorem assumptions. In particular, for $\vx = \vx_*$, reordering the last inequality and recalling that $\Gap(\vxb_k, \vx_*) \geq 0,$ we get
\begin{equation}\notag
    \E[\|\vx_* - \vx_k\|^2] \leq \frac{4 D(\vx_*, \vx_0)}{A_{k}\gamma + 1},
\end{equation}
completing the proof of the first theorem claim.

\noindent\textbf{Case 2: possibly random $\vx.$} In this case, we focus on the setting with $\gamma = 0.$ Since $\vx$ is potentially random, we need to account for the error $E_B.$ Combining \eqref{eq:sum-error}, \eqref{eq:first-term-bnd}, and \eqref{eq:EB-bound} with Lemma~\ref{lemma:gap-bnd}, we get
    \begin{equation}\label{eq:gap-for-exp-sup}
        \begin{aligned}
            A_k \E[\Gap(\vxb_k, \vx)]
            \leq \; & 2\E[D(\vx, \vx_0)] - \frac{A_{k}\gamma + 1}{4}\E[\|\vx - \vx_k\|^2]\\
           & +\sum_{i=1}^{k}\frac{15{a_{i}}^2}{2}\sum_{j=1}^m\sum_{i'=1}^{i} \frac{(1 - \frac{q_j}{2})^{i - i'}}{{p_j}q_j} \E\big[\|\mF_{j}(\vx_{i'}) - \mF_{j}(\vx_{i'-1})\|^2\big]\\
            &- \sum_{i=1}^k\frac{1}{4}\E\big[\|\vx_i - \vx_{i-1}\|^2\big]. 
        \end{aligned}
    \end{equation}
    The terms in the last two lines sum up to a non-positive quantity, following the same argument as in Case 1 and using $\frac{75 {\Lpq}^2 {a_i}^2}{2} - \frac{1}{4} \leq 0$ from the theorem assumptions. As a result, rearranging the last inequality and using that $- \frac{A_{k}\gamma + 1}{4}\E[\|\vx - \vx_k\|^2] \leq 0,$ we get that for any $\vx \in \dom(g),$ 
    \begin{equation}\notag
        \E[\Gap(\vxb_k, \vx)]
            \leq \frac{2 \E[D(\vx, \vx_0)]}{A_k}.
    \end{equation} 
    To complete the proof of the second claim, it remains to apply this inequality for $$\vx \in \argsup_{\vy \in \dom(g)} \Gap(\vxb_k, \vy)$$ and use that $\sup_{\vx \in \dom(g)} D(\vx, \vx_0) \geq D(\vy, \vx_0)$ for any $\vy \in \dom(g)$. Note that $\argsup_{\vy \in \dom(g)} \Gap(\vxb_k, \vy)$ must be nonempty, since $\Gap(\vxb_k, \vy)$ is continuous for $\vy \in \dom(g)$ and, as explained in Section \ref{sec:problem-setup}, $g$ is assumed to have a compact domain.  

    \noindent\textbf{Growth of sequence $A_k$}. To complete the proof of the theorem, it remains to bound below the growth of sequence $A_k$ for $k \geq 1.$ Observe first that for all stated conditions on the step size it holds that if they are satisfied for $\gamma = 0,$ they are satisfied for all $\gamma \geq 0.$ Thus we first bound below the growth assuming $\gamma = 0.$ In this case, it is not hard to verify that $a_i = \frac{1}{\sqrt{150}\Lpq} = \sqrt{\frac{2}{3}}\frac{1}{10 \Lpq}$ satisfies all stated inequalities, and so we conclude that $A_k \geq \sqrt{\frac{2}{3}}\frac{k}{10 \Lpq},$ for all $k \geq 1.$

    Now consider the case where $\gamma > 0.$ We argue that in this case we can choose $\alpha > 0$ such that $A_k = A_1 (1 + \alpha)^{k-1}$ satisfies both required inequalities. Observe that in this case $a_k = A_k - A_{k-1} = A_1 \alpha (1+\alpha)^{k-2}$ for $k \geq 2,$ while $a_1 = A_1.$ Let $j_* = \argmin_{1 \leq j \leq m} q_j.$ The two inequalities that $a_k, A_k$ need to satisfy for $k \geq 2$ are
    \begin{equation}\label{eq:step-size-conditions}
        \frac{{a_k}^2}{A_k \gamma + 1} \leq (1 + q_{j_*}/5)\frac{{a_{k-1}}^2}{A_{k-1} \gamma + 1}\quad \text{and} \quad \frac{25 {\Lpq}^2 {a_{k-1}}^2}{A_{k-1} \gamma + 1} \leq  \frac{A_{k-2}\gamma + 1}{4}.
    \end{equation}
    The first condition in \eqref{eq:step-size-conditions} is equivalent to  $\frac{{a_k}^2}{{a_{k-1}}^2}  \leq (1 + q_{j_*}/5) \frac{A_{k} \gamma + 1}{A_{k-1} \gamma + 1},$ and since $A_k$ is increasing, it is satisfied for $\frac{{a_k}^2}{{a_{k-1}}^2}  \leq (1 + q_{j_*}/5).$ The last inequality is equivalent to $1 + \alpha \leq \sqrt{1 + q_{j_*}/5}.$ Since $q_{j_*} \in [0, 1],$ we have that $\sqrt{1 + q_{j_*}/5} \geq \sqrt{1 + 2q_{j_*}/11 + {q_{j_*}}^2/11^2} = 1 + q_{j_*}/11.$ Hence, $\alpha \leq q_{j_*}/11$ suffices to satisfy the first inequality in \eqref{eq:step-size-conditions}.

    For the second inequality in \eqref{eq:step-size-conditions}, observe first that the case $k = 2$ is satisfied for $a_1 = A_1 \leq \frac{1}{10 \Lpq}.$ For $k > 2,$ since $A_k$ is increasing and $A_{k}\gamma + 1 > A_k \gamma$ for all $k,$ we get that it suffices that $\frac{a_{k}}{A_{k-1}} \leq \frac{\gamma}{10 {\Lpq}}.$ Equivalently, this last inequality is $\alpha \leq \frac{\gamma}{10 {\Lpq}}$. Hence, we conclude that $A_k \geq \frac{1}{10 \Lpq} \big(1 + \min\big\{\frac{q_{j_*}}{11}, \frac{\gamma}{10 {\Lpq}}\}\big)^{k-1}.$ 
\end{proof}

A few remarks are in order here. First, the point $\vxb_k$ with respect to which the gap is bounded in Theorem~\ref{thm:main} for the case $\gamma = 0$ is not efficiently computable, since, as discussed in Section \ref{sec:algo}, the updates $\vx_k$ can be dense. Thus, the computation of $\vxb_k$ cannot be carried out efficiently in a block-wise manner, in general. In this case, we can follow a standard approach of outputing $\vx_{\hat{k}}$ for a randomly sampled index $\hat{k} \in \{1, 2, \dots, K\},$ according to the probability distribution defined by $\{\frac{a_1}{A_K}, \frac{a_2}{A_K}, \dots, \frac{a_K}{A_K}\},$ independent of randomness in the algorithm updates. For the case $\gamma =0$, $a_1 = a_2 = \dots = a_K$, so this distribution is uniform. A key insight here is that for any $\vx \in \dom(g),$
\begin{equation}\notag
    \E_{\hat{k}}[\Gap(\vx_{\hat{k}}, \vx)] = \sum_{k=1}^K \frac{a_k}{A_K }\Gap(\vx_{{k}}, \vx). % \leq \Gap(\vxb_k, \vx). 
\end{equation}
%
%by Jensen's inequality. 
This tighter quantity can be bounded by our analysis if we simply omit the relaxation of it in \eqref{eq:gap-1}, at the beginning of our analysis. 
Of course, the variance of the estimate $\Gap(\vx_{\hat{k}}, \vx)$ can be lowered by sampling multiple indices instead of one and taking their empirical average. In particular, sampling order-$K/m$ such indices does not affect the algorithm's overall computational cost, modulo constant factors.  

We can conclude from Theorem \ref{thm:main} that the total number of iterations of Algorithm \ref{alg:main} to reach a solution with error (defined as the expectation of the supremum gap for $\gamma = 0$ and as the distance to solution for $\gamma > 0$) at most $\epsilon$ is 
\begin{equation}\label{eq:it-complexity-general}
    \cO\bigg(\min\bigg\{\frac{\Lpq \sup_{\vx \in \dom(g)} D(\vx, \vx_0)}{\epsilon},\; \Big(\frac{1}{q_{j_*}} + \frac{\Lpq}{\gamma}\Big)\log\Big(\frac{\Lpq D(\vx_*, \vx_0)}{\epsilon}\Big) \bigg\}\bigg).
\end{equation}
Note that the initialization is more expensive than individual iterations, requiring a full evaluation of $\mF(\vx_0).$ 

Because each iteration computes only two blocks of coordinates of $\mF$ and its cost is dominated by this computation, per-iteration cost of Algorithm \ref{alg:main} is generally much lower than the per-iteration cost of full-vector-update methods, especially when the number of blocks $m$ is large. For equally sized blocks and ``block coordinate-friendly'' problems, the cost of computing one block of coordinates of $\mF$ is of the order of $1/m$ times the cost of computing the full vector $\mF$, and thus the iterations of Algorithm \ref{alg:main} are $m$ times cheaper than iterations of full-vector-update methods.

Below we discuss concrete choices of the algorithm parameters (block sampling probability distributions $\vp$ and $\vq$) and how the resulting complexity bounds compare to those from related work.

\subsection{Sampling Distributions and Resulting Complexity}\label{sec:sampling-distributions}

We now discuss some basic choices of probability distributions $\vp, \vq$ and the resulting complexity bounds. As is usually the case for block coordinate methods, the improvements are primarily obtained when block-level Lipschitz constants are highly nonuniform and, moreover, nonuniform sampling is applied. 

\paragraph{Uniform Sampling.} If sampling is uniform, then each $p_j = q_j = 1/m,$ $j \in \{1, \dots, m\}.$ The parameter $\Lpq$ in this case becomes $\Lpq = m^{3/2}\sqrt{\sup_{\vx, \vy \in \dom(g), \vx \neq \vy} \frac{\sum_{j=1}^m \|\mF\bl{j}(\vx) - \mF\bl{j}(\vy)\|_{*}^2}{\|\vx - \vy\|^2}},$ which under the block-Lipschitzness assumption \eqref{eq:block-Lip-assmpt} becomes $\Lpq = m^{3/2}\|\vlambda\|_2,$ while the resulting iteration complexity is
\begin{equation}\label{eq:it-complexity-unif}
    \cO\bigg(\min\bigg\{\frac{m^{3/2}\|\vlambda\|_2 \sup_{\vx \in \dom(g)} D(\vx, \vx_0)}{\epsilon},\; \Big(m + \frac{m^{3/2}\|\vlambda\|_2}{\gamma}\Big)\log\Big(\frac{m\|\vlambda\|_2 D(\vx_*, \vx_0) }{\epsilon}\Big) \bigg\}\bigg).
\end{equation}
Prior work on randomized block coordinate methods for VI \cite{kotsalis2022simple} obtains a similar result, but with $m^{3/2}\|\vlambda\|_2$ replaced by $m^2 \|\vlambda\|_\infty.$ Since $\|\vlambda\|_\infty \leq \|\vlambda\|_2 \leq \sqrt{m}\|\vlambda\|_\infty$ with both inequalities being tight in general (the left inequality is tight when $\vlambda$ has one nonzero entry, the right inequality is tight when $\vlambda$ is uniform), \eqref{eq:it-complexity-general} provides a complexity guarantee that is never worse than the one in \cite{kotsalis2022simple} and improves upon it by a factor $\sqrt{m}$ when $\vlambda$ is highly non-uniform.

If we assume that the computation of a single block $\mF\bl{j}$ has cost proportional to $1/m$ of the cost of computing the full operator $\mF$ (a standard setting in which block coordinate methods are used), then compared to optimal full vector update (single block) methods \cite{korpelevich1976extragradient,nemirovski2004prox,nesterov2007dual,popov1980modification}, the overall complexity (runtime) resulting from \eqref{eq:it-complexity-unif} in the block coordinate setting is worse by a factor between $\sqrt{m}$ and $m,$ depending on whether $\vlambda$ is highly nonuniform or close to uniform. Compared to the complexity of the cyclic method \cite{song2023cyclic} under the block Lipschitz assumption \eqref{eq:block-Lip-assmpt}, \eqref{eq:it-complexity-unif} is worse by a factor $\sqrt{m}.$

In the context of variance reduction methods, compared to state of the art results \cite{alacaoglu2022stochastic,palaniappan2016stochastic,carmon2019variance}, the bound from \eqref{eq:it-complexity-unif} replaces $\sqrt{m}\|\vlambda\|_1$ with $m^{3/2}\|\vlambda\|_{2}$, which can be worse by a factor between $\sqrt{m}$ and $m.$ Thus, as will become apparent from the following discussion on importance-based sampling, non-uniform sampling is crucial for the usefulness of our results. 

\paragraph{Importance-Based Sampling.} Consider the following importance based sampling, where we would like to set $p_j, q_j \propto \sqrt{L_j}.$ A small technical consideration here is that it is undesirable to allow any $q_j$ to become too small compared to the uniform value $1/m,$ as the complexity bound for the strongly monotone case (the right part of the min in \eqref{eq:it-complexity-general}) grows with $\min_j 1/q_j.$ To deal with this issue, we can choose $q_j$ to be proportional to $\max\{\sqrt{L_j}, \frac{1}{m}\sum_{j'=1}^m \sqrt{L_{j'}}\}.$ We can see that in this case the normalizing constant can be bounded above by
\begin{equation}\label{eq:norm-const-q}
    \sum_{j=1}^m \max\{\sqrt{L_j}, \frac{1}{m}\sum_{j'=1}^m \sqrt{L_{j'}}\}\leq \sum_{j=1}^m\Big(\sqrt{L_j} + \frac{1}{m}\sum_{j'=1}^m \sqrt{L_{j'}}\Big) \leq 2 \sum_{j=1}^m \sqrt{L_j}. 
\end{equation}
Further, we can bound below the minimum value over sampling probabilities $q_j$ by
\begin{equation}\notag
    \min_{1\leq j \leq m} q_j =  \min_{1\leq j \leq m} \frac{\max\{\sqrt{L_j}, \frac{1}{m}\sum_{j'=1}^m \sqrt{L_{j'}}\}}{\sum_{l=1}^m \max\{\sqrt{L_l}, \frac{1}{m}\sum_{j'=1}^m \sqrt{L_{j'}}\}} \geq \frac{\frac{1}{m}\sum_{j'=1}^m \sqrt{L_{j'}}}{2\sum_{l=1}^m \sqrt{L_l}} = \frac{1}{2m},
\end{equation}
where in the last inequality we used $\max\{\sqrt{L_j}, \frac{1}{m}\sum_{j'=1}^m \sqrt{L_{j'}}\} \geq \frac{1}{m}\sum_{j'=1}^m \sqrt{L_{j'}}$ and \eqref{eq:norm-const-q}. 

Observe that even though we are now guaranteed that $\min_{1\leq j \leq m} q_j \geq \frac{1}{2m},$ this does not prevent possibly highly nonuniform sampling probabilities $\vq.$ In particular, in the extreme case where one of the Lipschitz parameters $L_j$ is much higher than the rest, $q_j$ can be as high as $1/2.$

The probability vector $\vp$ can be chosen either as $\vp = \vq$ or defined via $p_j = \frac{\sqrt{L_j}}{\sum_{j'=1}^m \sqrt{L_{j'}}},$ leading to qualitatively the same complexity bounds. In particular, under the block Lipschitz assumption \eqref{eq:block-Lip-assmpt}, we have $\|\mF_j(\vx) - \mF_j(\vy)\|_{*}^2 \leq {L_j}^2 \|\vx - \vy\|^2$ for all $j \in \{1, \dots, m\}$ and thus %$\Lpq = \|\vlambda\|_{1/2}$ 
\begin{align*}
    \Lpq &:= \sqrt{\sup_{\vx, \vy \in \dom(g), \vx \neq \vy} \frac{\sum_{j=1}^m \frac{1}{p_j {q_j}^2}\|\mF_j(\vx) - \mF_j(\vy)\|_{*}^2}{\|\vx - \vy\|^2}}\\
    &\leq \sqrt{\sum_{j=1}^m p_j\frac{{L_j}^2}{{p_j}^2 {q_j}^2} } \leq \sqrt{\sum_{j=1}^m p_j{C_\vp}^2 {C_\vq}^2} = C_\vp C_\vq,
\end{align*}
where we used $p_j, q_j \geq \sqrt{L_j}$ and where $C_\vp$ and $C_\vq$ are the normalization constants for sampling probabilities $\vp, \vq,$ which are both $\cO\big(\sum_{j=1}^m \sqrt{L_{j}} = \|\vlambda\|_{1/2}^{1/2}\big),$ as discussed above. Thus, $\Lpq = \cO(\|\vlambda\|_{1/2}).$ 
Therefore, the bound on the number of iterations of our algorithm becomes
\begin{equation}\label{eq:it-complexity-nonuniform}
    \cO\bigg(\min\bigg\{\frac{\|\vlambda\|_{1/2} \sup_{\vx \in \dom(g)} D(\vx, \vx_0)}{\epsilon},\; \Big(m + \frac{\|\vlambda\|_{1/2}}{\gamma}\Big)\log\Big(\frac{\|\vlambda\|_{1/2} D(\vx_*, \vx_0)}{\epsilon}\Big) \bigg\}\bigg).
\end{equation}
In general, $ \|\vlambda\|_\infty \leq \|\vlambda\|_{1/2} \leq m^{3/2}\|\vlambda\|_2 \leq m^2 \|\vlambda\|_\infty.$ As a result, in block coordinate settings, complexity from \eqref{eq:it-complexity-nonuniform} is never worse than the complexity of the randomized block coordinate VI algorithm from \cite{kotsalis2022simple}. Further, under the block Lipschitz assumption \eqref{eq:block-Lip-assmpt}, our result in terms of the overall complexity (number of full computations of $\mF$) is never worse than the complexity of the cyclic block coordinate method from \cite{song2023cyclic} by a factor larger than $\sqrt{m},$ but it can be better by a factor $m$. Comparing to optimal full vector update methods \cite{korpelevich1976extragradient,nemirovski2004prox,nesterov2007dual,popov1980modification}, the iteration complexity \eqref{eq:it-complexity-nonuniform} can be higher by a factor $m^2$ when $\vlambda$ is close to uniform or of the same order if $\vlambda$ is highly nonuniform. Since the per-iteration cost can be $1/m$ lower, in the latter case our algorithm would be order-$m$ times faster than optimal full vector update methods.  

In the context of variance reduction methods, compared to state of the art results \cite{alacaoglu2022stochastic,palaniappan2016stochastic,carmon2019variance}, the bound from \eqref{eq:it-complexity-unif} replaces $\sqrt{m}\|\vlambda\|_1$ with $\|\vlambda\|_{1/2}$, which can be worse by a factor up to $\sqrt{m}$ (as $\vlambda$ becomes more uniform) or better by a factor up to $\sqrt{m}$ (as $\vlambda$ becomes more nonuniform). We recall once again that the latter case is when, in general, variance reduction-based methods improve over classical methods \cite{korpelevich1976extragradient,nemirovski2004prox,nesterov2007dual,popov1980modification}, thus we believe our result to be a useful complexity improvement for finite-sum settings.  
Interestingly, to the best of our knowledge, this is the first result for general finite-sum problems where the leading (dependent on Lipschitz parameter) term in the complexity does not explicitly depend on $m.$ 

\subsection{Combining Variance Reduction with Block Coordinate Strategies}

Because our method only requires that there is a sum-decomposition of the operator $\mF$ into $m$ component operators, it should be clear that our method can be used as both a finite-sum and block coordinate method simultaneously. In particular, if $\mF$ can be represented in the form
\begin{equation}\notag
    \mF(\vx) = \sum_{i=1}^n \mF_i(\vx),
\end{equation}
and, further, if $g$ is block coordinate separable over $\ell$ blocks, then we can consider the sum decoposition of $\mF$ into $m = n\ell$ blocks of the form
\begin{equation}\label{eq:vr-block}
    \mF(\vx) = \sum_{i=1}^n\sum_{j=1}^\ell \mU^j \mF_i(\vx).
\end{equation}
If $L_{ij}$ denotes the Lipschitz parameter of summand $i$ over block $j,$ then the vector $\vlambda$ would be formed by stacking all these Lipschitz parameters into a vector (of size $m = n\ell$) and the same bound on the number of iterations \eqref{eq:it-complexity-nonuniform} would apply. It is crucial here that our method supports lazy, sparse updates (see Algorithm \ref{alg:lazy} in Appendix \ref{appx:lazy-algo}) so that the resulting arithmetic complexity/runtime in this case becomes equal to the complexity of computing one full operator $\mF(\vx_0)$ plus the bound in \eqref{eq:it-complexity-nonuniform} times the complexity of computing a component $\mF_i\bl{j},$ which can be constant for small/constant block sizes. This should be contrasted with existing variance-reduced methods for GMVI \cite{cai2024variance,alacaoglu2022stochastic,palaniappan2016stochastic}, which do not take into account reducing per-iteration complexity for sparse component operators. The only exception in this part of the literature are methods specifically targetting bilinearly coupled primal-dual problems, such as \cite{alacaoglu2022complexity,song2022coordinate,carmon2020coordinate}. A comparison to relevant methods in the case of least absolute deviation, where component operators have a single nonzero coordinate, is provided in Section \ref{sec:apps-LAD}.

%%%%%%%%%%%%%%%%%%%%%%%%%%%%%%

\section{Examples}\label{sec:apps}

In this section, we discuss different example applications of the results obtained in previous sections. Since nonuniform sampling is essential for the obtained bounds to be useful, whereas component Lipschitz parameters needed for selecting the component sampling distributions are primarily computable for linear operators, our focus is on important examples of GMVIs for which the operator $\mF$ is linear. 

\subsection{Policy Evaluation in Reinforcement Learning}

As our first example, we consider the variational inequality arising from policy evaluation in reinforcement learning, as introduced in \cite[Section 4]{kotsalis2022simpleII}. Here we consider the deterministic (or dynamic programming) version of the policy evaluation problem, where all problem parameters are known and fixed. 

More specifically, consider a Markov decision process (MDP) with a finite state and action space described by $(\cS, \cA, p, r, \beta),$ where $\cS = \{1, \dots, n\}$ denotes the state space, $\cA = \{1, \dots, N\}$ denotes the action space, $p: \cS\times \cS \times \cA \to [0, 1]$ denotes the transition probability function, $r: \cS\times \cS \times \cA \to \sR$ denotes the reward, and $\beta \in (0, 1)$ a discount factor. The dynamics of transitions of this MDP  is described as follows. When in a given state $s \in \cS,$ after taking action $a \in \cA,$ the process moves to a new state state $s^+ \in \cS$ with probability $p(s, s^+, a)$ and reaps the reward $r(s, s^+, a).$ A policy $\nu: \cS \times \cA \to [0, 1]$ specifies the probabilities of taking an action $a$ when in a state $s.$ Once a policy is fixed, the states follow a time-homogeneous Markov chain with transition probability matrix (kernel) $\mP$ with entries $\mP(s, s^+) = \sum_{a \in \cA} \nu(s, a) p(s, s^+, a).$ Following \cite{kotsalis2022simpleII}, we assume that this Markov chain has a unique stationary distribution $\vpi,$ which satisfies $\vpi = \mP \vpi.$

Policy evaluation refers to the problem of evaluating the value function $V_\nu$, defined by $$V_\nu(s) = \E\big[\sum_{t=0}^\infty \beta^t r_t|s_0 = s\big],$$ where $r_t = r(s_t, s_{t+1}, a_t)$ and $(s_t, s_{t+1}, a_t)$ refers to transitioning from state $s_t$ to state $s_{t+1}$ after taking action $a_t$ at time increment $t.$ When the policy function admits a linear parametric approximation $V_\vx = \mPhi \vx,$ where $\vx \in \sR^d$ is the parameter vector and $\mPhi$ a known feature matrix, \cite{kotsalis2022simpleII} shows that the policy evaluation can be reduced to a GMVI problem with $g \equiv 0$ and $\mG$ a linear operator defined by
\begin{equation}\label{eq:RL-op}
    \mG(\vx) = \mPhi^\top \mM(\mPhi \vx - \mR - \beta \mP \mPhi\vx),
\end{equation}
which is $\mu$-strongly monotone and $L$-Lipschitz for some $\mu > 0,$ $L > 0.$ In \eqref{eq:RL-op}, $\mM$ is a diagonal matrix with $\vpi$ on its main diagonal and $\mR$ is a reward vector defined based on the reward function $r$, probability function $p,$ and policy $\nu.$ 

To transfer strong monotonicity to the regularizing function $g,$ we can equivalently consider GMVI with $\mF(\vx) = \mG(\vx) - \mu \vx$ and $g(\vx) = \frac{\mu}{2}\|\vx\|_2^2.$ The two problems are equivalent since, due to the problem being unconstrained, the optimal solution satisfies $\mG(\vx_*) = \vzero,$ which is the same  as $\mF(\vx_*) + \nabla g(\vx_*) = \vzero.$ 

Following \cite{kotsalis2022simpleII}, denoting by $\Pi$ the stationary distribution of $(s, s^+, a)$, the operator $\mF$ can be written in a finite-sum form as
\begin{equation}\label{eq:RL-finite-sum}
    \mF(\vx) = \sum_{(s, s^+, a) \in \cS \times \cS \times \cA} \Pi(s, s^+, a) \big(\mPhi(s)(\innp{\mPhi(s) - \beta \mPhi(s^+), \vx} - r(s, s^+, a)) - \mu \vx\big),
\end{equation}
where $\mPhi(s)$ is the feature vector associated with state $s$ ($s^{\rm th}$ row of $\mPhi$). 

Denote the summands in \eqref{eq:RL-finite-sum} as $\mF_{s, s^+, a}(\vx)$. There are $\nnz(\Pi)$---the number of nonzero entries in $\Pi$---such nonzero summands. Note that the number of triples $(s, s^+, a)$ that occur with nonzero probability is typically much smaller than $|\cS|^2 |\cA| = n^2 N.$ It is not hard to see that the Lipschitz constant of each $(s, s^+, a)$ summand is $L_{s, s^+, a} = \Pi(s, s^+, a)(\|\mPhi(s)\|_2 \|\mPhi(s) - \beta\mPhi(s^+)\|_2 -\mu)$. Letting $\vlambda$ be the vector comprised of elements $L_{s, s^+, a},$ we get that given $\epsilon > 0,$ Algorithm \ref{alg:main} with the $\ell_2$ norm and $D(\vx, \vxh) = \frac{1}{2}\|\vx - \vxh\|_2^2$ as the Bregman divergence can output a point $\vx \in \sR^d$ such that $\|\vx - \vx_*\|_2 \leq \epsilon$ using
\begin{equation}\notag
    \cO\bigg(\Big(\nnz(\Pi) + \frac{d \|\vlambda\|_{1/2}}{\mu}\Big)\log\Big(\frac{ \|\vlambda\|_{1/2}\|\vx_0 - \vx_*\|_2}{\epsilon}\Big)\bigg)
\end{equation}
arithmetic operations, as each iteration uses $\cO(d)$ arithemetic operations and there are $\cO(\nnz(\Pi))$ (nonzero) components in the finite-sum decomposition of $\mF.$

In the regime where $\frac{d \|\vlambda\|_{1/2}}{\mu} = \cO(\nnz(\Pi)),$ this arithmetic complexity is near-linear in $\nnz(\Pi)$---the problem size. By comparison, the number of arithmetic operations taken by full vector update methods like \cite{korpelevich1976extragradient,nemirovski2004prox,nesterov2007dual,popov1980modification} would be $\cO\Big(\nnz(\Pi)\frac{L}{\mu}\log\big(\frac{ L \|\vx_0 - \vx_*\|_2}{\epsilon}\big)\Big)$, which can be much higher, depending on the values of $\|\vlambda\|_{1/2}$ (which can be comparable to $L$), $L,$ and $\mu.$

We remark here that we did not discuss the setting in which $\Pi$ is not known but can instead be sampled from---the setting originally considered in \cite{kotsalis2022simpleII}. The reason is that we did not consider here stochastic approximation settings for our method. Further, as our results crucially rely on importance-based sampling to lead to improved complexity bounds, it is unclear how to generalize such results to the stochastic approximation (infinite-sum) settings. However, there are at least two possible avenues for addressing such settings (in a useful way) starting from our results. First, and the more direct one, is to draw a sufficiently large number of samples from $\Pi$ and run the algorithm on the empirical version of the problem, after establishing appropriate concentration/uniform convergence results akin to e.g., \cite{shalev2010learnability}. Another possible approach would be to generalize the block coordinate version of our method to settings with stochastic estimates of the (block coordinate) operator and potentially further improve the complexity reported in \cite{kotsalis2022simpleII}, with the improvements coming from the importance-based sampling of (blocks of) coordinates. This seems possible, at the expense of added technical work, which we choose to omit to maintain focus and readability. 

\subsection{Matrix Games}

One of the most basic examples of GMVI are standard (simplex-simplex) matrix games, defined by
\begin{equation}\label{eq:matrix-games}
    \min_{\vz \in \Delta^d}\max_{\vy \in \Delta^n}\innp{\mA \vz, \vy},
\end{equation}
where $\Delta^d, \Delta^n$ denote the probability simplexes of size $d, n,$ respectively.

As a GMVI problem, \eqref{eq:matrix-games} can be posed in a standard way by stacking the primal and dual variables $\vx = (\vz^\top, \vy^\top)^\top$ and setting $\mF(\vx) = (\vy^\top \mA, - (\mA \vz)^\top)^\top,$ $g(\vx) = g_1(\vz) + g_2(\vy),$ with $g_1(\vz)$  being the indicator function of the probability simplex $\Delta^d$ and $g_2(\vy)$ the indicator function of  $\Delta^n$. We consider applying Algorithm \ref{alg:main} with $D(\vx, \vxh) = D_1(\vz, \vzh) + D_2(\vy, \vyh)$, where $D_1$ and $D_2$ are the Bregman divergences of the negative entropy function (a.k.a.\ the Kullback-Leibler divergences), which are 1-strongly convex w.r.t.\ the $\ell_1$ norm. We define the norm for the stacked vector $\vx = (\vz^\top, \vy^\top)^\top$ by $\|\vx\| = \sqrt{\|\vz\|_1^2 + \|\vy\|_1^2}.$   

There are different ways of writing the operator $\mF$ in a finite-sum form. An important consideration is that the probability simplex is not a (block or coordinate) separable set (except for the trivial partition containing one set with all the coordinates). Thus, even if $\mF$ is chosen as a sparse vector, updating all (primal $\vz$ or dual $\vy$) coordinates may still be unavoidable in each iteration. 

Let us consider the following sum-decomposition of $\mF$ into $m = n + d$ components, where we define
\begin{equation}\notag%\label{eq:matrix-games-decomposition}
    \mF_j(\vx) = \begin{cases}
        \begin{bmatrix}
            \mA_{j:} y_j \\
            \vzero_n
        \end{bmatrix}, &\text{ if } j \in \{1, \dots, n\}\\
        \begin{bmatrix}
            \vzero_d \\
            -\mA_{:j-n}z_{j-n}
        \end{bmatrix}, &\text{ if } j \in \{n + 1, \dots, n + d\}
    \end{cases},
\end{equation}
where $\mA_{j:}$ denotes the $j^{\rm th}$ row of matrix $\mA$, written as a column vector, $\mA_{:j}$ denotes the $j^{\rm th}$ column of $\mA$, and $\vzero_i$ for $i \in \{n, d\}$ denotes the column vector of size $i$ whose entries are all zeros.

To bound the parameter $\Lpq$ in this case, we set $p_j = q_j$ for all $j \in \{1, \dots, m\}$, and observe that for all $\vx, \vxh \in \Delta^d \times \Delta^n$:
\begin{align*}
    \sum_{j=1}^m \frac{1}{p_j {q_j}^2}\|\mF_j(\vx) - \mF_j(\vxh)\|_*^2 =&\; \sum_{j=1}^n \frac{1}{ {p_j}^3}\|\mA_{j:}\|_\infty^2(y_j - \hat{y}_j)^2 + \sum_{j'=1}^{d} \frac{1}{ {p_{j' + n}}^3}\|\mA_{:j'}\|_\infty^2(z_j - \hat{z}_j)^2.
\end{align*}
Define $\vrho \in \sR^n_+$ as the vector with entries $\rho_j = \|\mA_{j:}\|_\infty$ and $\vsigma \in \sR^d$ as the vector with entries $\sigma_j = \|\mA_{:j}\|_\infty.$ Set $p_j \propto {\rho_j}^{2/3}$ for $j \in \{1, \dots, n\}$ and $p_j \propto {\sigma_{j - n}}^{2/3}$ for $j \in \{n+1, \dots, n+d\}.$ Then:
\begin{align*}
    \sum_{j=1}^m \frac{1}{p_j {q_j}^2}\|\mF_j(\vx) - \mF_j(\vxh)\|_*^2 \leq &\; \|(\vrho^\top, \vsigma^\top)^\top\|_{2/3}^{2} \big(\|\vy - \vyh\|_2^2 + \|\vz - \vzh\|_2^2\big)\\
    &\leq \|(\vrho^\top, \vsigma^\top)^\top\|_{2/3}^{2} \|\vx - \vxh\|^2,
\end{align*}
where we used %$\|(\vrho^\top, \vsigma^\top)^\top\|_{2/3} \leq \|\vsigma\|_{2/3} + \|\vrho\|_{2/3}$ (by the triangle inequality) and 
$\|\vy - \vyh\|_2^2 + \|\vz - \vzh\|_2^2 \leq \|\vy - \vyh\|_1^2 + \|\vz - \vzh\|_1^2  = \|\vx - \vxh\|^2$ (by the relationship between $\ell_p$ norms and the definition of $\|\cdot\|$). Thus, we get that $\Lpq \leq \|(\vrho^\top, \vsigma^\top)^\top\|_{2/3}$ and the runtime/arithmetic complexity of our algorithm becomes
\begin{equation}\notag
\begin{aligned}
    \cO\Big(\nnz(\mA) + \frac{\|(\vrho^\top, \vsigma^\top)^\top\|_{2/3}(n + d)(\log(n) + \log(d))}{\epsilon}\Big)
    = \widetilde{\cO}\Big(\nnz(\mA) + \frac{\|(\vrho^\top, \vsigma^\top)^\top\|_{2/3}(n + d)}{\epsilon}\Big),
\end{aligned}
\end{equation}
where $\nnz(\mA)$ comes from the initialization step in which $\mF(\vx_0)$ is computed, $n + d$ is the per-iteration cost, and $\ln(n) + \ln(d)$ is $\sup_{\vz \in \Delta_d, \vy \in \Delta_n}(D_1(\vz, \vz_0) + D_2(\vy, \vy_0))$ when the algorithm is initialized at $\vz_0 = \frac{1}{d}\vone_d,$ $\vy_0 = \frac{1}{n}\vone_n,$ where $\vone_{\cdot}$ is the vector of all ones of the appropriate size. 

Comparing to existing complexity results in the literature, our method can be faster in the settings where the matrix $\mA$ is dense and vectors $\vrho, \vsigma$ are highly nonuniform. In particular, optimal full vector update methods like \cite{nemirovski2004prox,nesterov2007dual} with the same choice of norms and the same Bregman divergence have runtime $\widetilde{\cO}\big(\nnz(\mA) \frac{\|\mA\|_\infty}{\epsilon}\big).$ Variance-reduced methods like \cite{carmon2020coordinate,alacaoglu2022stochastic} on the other hand lead to runtimes of the order $\cO\big(\nnz(\mA) + \frac{\sqrt{\nnz(\mA)(n+d)}(\|\vrho\|_\infty + \|\vsigma\|_\infty)}{\epsilon}\big)$ and $\cO\big(\nnz(\mA) + \frac{\sqrt{\nnz(\mA)}(\max\{\max_{1\leq i \leq n}\|\mA_{i:}\|_2, \max_{1\leq j \leq d}\|\mA_{:j}\|_2\})}{\epsilon}\big),$ which are lower than the runtimes of full vector update methods whenever $\mA$ is dense and/or column and vector $\ell_2$ norms are highly nonuniform. Our result further improves upon these results when $\mA$ is dense (so $\nnz(\mA) \leq nd$ can be much larger than $n + d$) and $\vrho, \vsigma$ are highly nonuniform (e.g., if $\|(\vrho^\top, \vsigma^\top)^\top\|_{2/3}$ is essentially dimension-independent). 

A few more remarks are in order here. Results in \cite{alacaoglu2022stochastic,carmon2020coordinate} rely on choosing sampling probabilities that are iteration-dependent. If the same were possible in our case, then we would choose $q_j = \frac{|y_j - \hat{y}_j|}{\|\vy - \vyh\|_1}$, $p_j \propto \|\mA_{j:}\|_\infty$ for $j \in \{1, \dots, n\}$, and $q_j = \frac{|z_{j-n} - \hat{z}_{j-n}|}{\|\vz - \vzh\|_1}$, $p_j \propto \|\mA_{:j-n}\|_\infty$ for $j \in \{n+1, \dots, n+d\}$ to further improve our result to scale with  $\|\vsigma\|_{1} + \|\vrho\|_{1}$ instead of $\|(\vrho^\top, \vsigma^\top)^\top\|_{2/3}.$ However, our current analysis does not support doing so, due to the way the ``recursive variance'' argument is carried out in Lemma~\ref{claim:rec-var}. It is an interesting question whether an alternative analysis could allow for iteration-dependent sampling probabilities $q_j$ leading to an overall improved complexity for this example. 

Second, we used $\cO(n + d)$ to bound the per-iteration cost; however, a slightly better bound applies. In particular, in iterations where $j \in \{1, \dots, n\}$ is selected, the per-iteration cost is $\cO(d),$ while when $j \in \{n + 1, \dots, n + d\}$ is selected, the per-iteration cost is $\cO(n).$ Based on the chosen sampling probabilities, the average per-iteration cost is $\cO\big(\frac{\|\vrho\|_{2/3}^{2/3}}{\|\vrho\|_{2/3}^{2/3} + \|\vsigma\|_{2/3}^{2/3}}d + \frac{\|\vsigma\|_{2/3}^{2/3}}{\|\vrho\|_{2/3}^{2/3} + \|\vsigma\|_{2/3}^{2/3}}n\big)$. Thus it is possible to argue that our expected runtime is of the order
\begin{equation}\notag
    \widetilde{\cO}\bigg(\nnz(\mA) + \frac{(\|(\vsigma^\top, \vrho^\top)^\top\|_{2/3}^{1/3})(d \|\vrho\|_{2/3}^{2/3} + n \|\vsigma\|_{2/3}^{2/3} )}{\epsilon}\bigg).
\end{equation}
Finally, it is possible to adapt our results to cases where only $\vrho$ or only $\vsigma$ is highly nonuniform. We only discuss the case where $\vrho$ is highly nonuniform, while the case where $\vsigma$ is highly nonuniform follows by a similar argument, using symmetry between the primal and the dual. Consider in this case the sum decomposition of $\mF$ with summands $\mF_j,$ $j \in \{1, \dots, n\},$ defined by
\begin{equation}\notag
    \mF_j(\vx) = \begin{bmatrix}
        \mA_{j:} y_j\\
        -({\mA_{j :}}^\top \vz) \ve_j
    \end{bmatrix},
\end{equation}
where $\ve_j$ denotes the $j^{\rm th}$ standard basis vector in $\sR^n.$ Then, setting $p_j = q_j \propto \|\mA_{j:}\|_{\infty}^{1/2}$, we get
\begin{align*}
    \sum_{j=1}^m \frac{1}{p_j {q_j}^2}\|\mF_j(\vx) - \mF_j(\vxh)\|_*^2 =&\; \sum_{j=1}^n \frac{1}{ {p_j}^3}\|\mA_{j:}\|_\infty^2(y_j - \hat{y}_j)^2 + \sum_{j'=1}^{n} \frac{1}{ {p_{j'}}^3}(\mA_{j' :}^\top (\vz - \vzh))^2\\
    \leq &\; \sum_{j=1}^n \frac{1}{ {p_j}^3}\|\mA_{j:}\|_\infty^2(y_j - \hat{y}_j)^2 + \sum_{j'=1}^{n} \frac{1}{ {p_{j'}}^3}\|\mA_{j' :}\|_\infty^2 \|\vz - \vzh\|_1^2\\
    = & \; \sum_{j=1}^n \frac{p_j}{ {p_j}^4}\|\mA_{j:}\|_\infty^2(y_j - \hat{y}_j)^2 + \sum_{j'=1}^{n} \frac{p_{j'}}{ {p_{j'}}^4}\|\mA_{j' :}\|_\infty^2 \|\vz - \vzh\|_1^2\\
    \leq & \; \|\vrho\|_{1/2}^2 \big(\|\vy - \vyh\|_2^2 + \|\vz - \vzh\|_1^2\big)\\
    \leq &\; \|\vrho\|_{1/2}^2 \|\vx - \vxh\|^2,
\end{align*}
where we used ${\mA_{j' :}}^\top (\vz - \vzh) \leq \|\mA_{j' :}\|_\infty \|\vz - \vzh\|_1$ (by H\"{o}lder's inequality), $p_j \leq 1,$ for all $j,$  $\|\vy - \vyh\|_2 \leq \|\vy - \vyh\|_1$, and the definitions of the sampling probabilities $p_j$ and the norm $\|\cdot\|.$ 

The resulting runtime in this case is thus
\begin{equation}
    \cO\Big(\nnz(\mA) + \frac{\|\vrho\|_{1/2}(n + d)(\log(n) + \log(d))}{\epsilon}\Big). 
\end{equation}

\subsection{Box-constrained $\ell_\infty$ Regression}

Box constrained regression problems with the $\ell_\infty$ norm are prevalent in theoretical computer science, particularly in the context of network flow problems \cite{sherman2017area,sidford2018coordinate} and linear programming \cite{lee2015efficient}. Such problems are defined by
\begin{equation}\label{eq:infty-reg}
    \min_{\vz \in [-1, 1]^d}\|\mA \vz - \vb\|_\infty.
\end{equation}
and have recently been studied formulated in a Lagrangian form as box-simplex matrix games, corresponding to: 
\begin{equation}\label{eq:box-simplex}
    \min_{\vz \in [-1, 1]^d}\max_{\vy \in \Delta^n}\innp{\mA \vz, \vy} - \innp{\vb, \vy}.
\end{equation}
Such problems are challenging in either the original \eqref{eq:infty-reg} or primal-dual \eqref{eq:box-simplex} form because the natural norm for the primal space is the $\ell_\infty$ norm, as it is the only $\ell_p$ norm for which the diameter of the feasible set (unit $\ell_\infty$ ball) is dimension-independent. To utilize algorithms working with non-Euclidean norms, it is typically needed that the distance generating function defining the selected Bregman divergence $D(\cdot, \cdot)$ is 1-strongly convex w.r.t.\ the selected problem norm. However, it is known that this is not possible for the $\ell_\infty$ norm (or any $\ell_p$ norm with $p > 2$ and $p$ not trivially close to 2) without the Bregman divergence scaling polynomially with the dimension; see, e.g., \cite{d2018optimal}. Obtaining algorithms with the optimal $1/\epsilon$ scaling, where $\epsilon > 0$ is the error parameter, while achieving an overall near-linear arithmetic complexity was a major open problem in theoretical computer science. A breakthrough result was obtained in \cite{sherman2017area}, using novel ``area convex'' functions to simultaneously regularize primal and dual variables.    

As a GMVI problem, \eqref{eq:box-simplex} can be posed by stacking the primal and dual variables $\vx = (\vx^\top, \vy^\top)^\top$ and setting $\mF(\vx) = (\vy^\top \mA, - (\mA \vz - \vb)^\top)^\top,$ $g(\vx) = g_1(\vz) + g_2(\vy),$ with $g_1(\vz)$  being the indicator function of the  $\ell_\infty$ ball $[-1, 1]^d$ and $g_2(\vy)$ the indicator function of the probability simplex $\Delta^n.$ For the sum decomposition of $\mF,$ we use $\mF_j$, for $j \in \{1, \dots, n\},$ defined by
\begin{equation}\label{eq:sum-decomp-dense-sparse}
    \mF_j(\vx) = \begin{bmatrix}
        (\mA_{:j}^\top \vy) \ve_j\\
        -(\mA_{:j} z_j - \vb)
    \end{bmatrix},
\end{equation}
where $\ve_j$ denotes the $j^{\rm th}$ standard basis vector in $\sR^d.$ Observe that because the primal space is coordinate-separable and we will choose below a coordinate separable distance generating function for the primal portion of the Bregman divergence, our algorithm can be seen as simultaneously performing a coordinate update on the primal and a full vector update on the dual. 

To apply our results, we use sampling probabilities $p_j = q_j$ specified below and define $\|\vz\|_\vp := \sqrt{\sum_{j=1}^d p_j {z_j}^2}$ for
the primal norm. The norm $\|\vx\|$ is then defined via $\|\vx\| = \sqrt{\|\vz\|_\vp^2 + \|\vy\|_1^2}.$ For the Bregman divergence $D(\vx, \vxh),$ we choose $D(\vx, \vxh) = D_1(\vz, \vzh) + D_2(\vy, \vyh)$ with $D_1(\vz, \vzh) = \frac{1}{2}\|\vz - \vzh\|_\vp^2$ and $D_2(\vy, \vyh)$ being the Bregman divergence of the negative entropy function. Observe that since $D_1$ is 1-strongly convex w.r.t.\ $\|\cdot\|_\vp$ and $D_2$ is 1-strongly convex w.r.t.\ the $\ell_1$ norm, we have that $D$ is 1-strongly convex w.r.t.\ $\|\cdot\|,$ as required. Observe further that $D_1(\vz, \vzh) \leq 2$ for any $\vz, \vzh \in [-1, 1]^d,$ since $\vz - \vzh \in [-2, 2]^d$ and $\sum_{j=1}^d p_j = 1,$ as $\vp$ is a probability vector. Additionally, $D(\vy, \vy_0) \leq \ln(n)$ for any $\vy$ and $\vy_0 = \frac{1}{n}\vone_n$.   

As before, to determine the complexity of our algorithm, we need to bound the parameter $\Lpq$ defined by \eqref{eq:Lpq-def}. To do so, recalling that we choose $p_j = q_j,$ we have
\begin{align*}
    \sum_{j=1}^m \frac{1}{p_j {q_j}^2}\|\mF_j(\vx) - \mF_j(\vxh)\|_*^2 \leq &\; \sum_{j=1}^d \frac{1}{ {p_j}^4}\|\mA_{:j}\|_\infty^2\|\vy - \vyh\|_1^2 + \sum_{j'=1}^{d} \frac{1}{ {p_{j'}}^3}\|\mA_{:j'}\|_\infty^2 (z_j - \hat{z}_j)^2\\
    =&\; \sum_{j=1}^d \frac{p_j}{ {p_j}^5}\|\mA_{:j}\|_\infty^2\|\vy - \vyh\|_1^2 + \sum_{j'=1}^{d} \frac{{p_{j'}}^2}{ {p_{j'}}^5}\|\mA_{:j'}\|_\infty^2 (z_j - \hat{z}_j)^2.
\end{align*}
Choosing $p_j \propto \|\mA_{:j}\|_\infty^{2/5}$ for $j \in \{1, \dots, d\}$ and defining $\vsigma$ as the vector of $\|\mA_{:j}\|_\infty$ as we did in the previous subsection, we get that
\begin{align*}
    \sum_{j=1}^m \frac{1}{p_j {q_j}^2}\|\mF_j(\vx) - \mF_j(\vxh)\|_*^2 \leq &\; \|\vsigma\|_{2/5}^2\Big(\|\vy - \vyh\|_1^2 + \sum_{j'=1}^{d}  {p_{j'}}^2 (z_j - \hat{z}_j)^2\Big)\\
    \leq &\; \|\vsigma\|_{2/5}^2\|\vx - \vxh\|^2,
\end{align*}
where the last inequality follows by the definition of the norm $\|\cdot\|$ and because $p_j \in (0, 1),$ $\forall j.$ 

Thus, $\Lpq \leq \|\vsigma\|_{2/5}.$ To bound the runtime/arithmetic complexity of our algorithm, we observe that $\nnz(\mA)$ operations are needed for initialization, while each iteration in which $j \in \{1, \dots, m\}$ is selected takes $\nnz(\mA_{:j})$ operations for computing $\mF_j$ and $\cO(n)$ operations to update $\vx$ (the update on the primal side takes a constant number of arithmetic operations, due to the coordinate separability discussed above). Thus, defining $c = \max_{1 \leq j \leq d} \nnz(\mA_{:j}),$ we get that the per-iteration cost is bounded by $c + n.$ The resulting runtime to output a point $\vxb_k$ with $\E[\Gap(\vxb_k, \vu)] \leq \epsilon$ for any $\vu \in [-1, 1]^d \times \Delta^n$ is 
\begin{equation}\notag
    \cO\Big(\nnz(\mA) + \frac{\|\vsigma\|_{2/5}(c + n)D(\vu, \vx_0)}{\epsilon}\Big). 
\end{equation}
Taking $\vu$ to be the point that maximizes $D(\vu, \vx_0)$ for an arbitrary $\vz_0 \in [-1, 1]^n$ and $\vy_0 = \frac{1}{n}\vone_n,$ we get that the runtime to obtain GMVI gap at most $\epsilon$ for the box-simplex problem \eqref{eq:box-simplex} is
\begin{equation}\label{eq:box-simplex-arithmetic-complexity}
    \cO\Big(\nnz(\mA) + \frac{\|\vsigma\|_{2/5}(c + n)\log(n)}{\epsilon}\Big). 
\end{equation}
On the other hand, to bound the optimality gap of the original $\ell_\infty$ regression problem \eqref{eq:infty-reg}, we observe that for $\vu = (\vz^\top, \vy^\top)^\top,$ the gap $\Gap(\vxb_k, \vu)$ simplifies to $\Gap(\vxb_k, \vu) = \innp{\mA \vzb_k - \vb, \vy} - \innp{\mA \vz - \vb, \vyb_k},$ where $\vxb_k = (\vzb_k^\top, \vyb_k^\top)^\top.$ Thus, defining $\vu$ by choosing $\vz = \vz_*,$ where $\vz_*$ solves \eqref{eq:infty-reg}, and $\vy = \sign((\mA\vzb_k - \vb)_{i_*}) \ve_{i_*}$ for $i_* = \argmax_{1 \leq i \leq n}(\mA\vzb_k - \vb)_{i},$ we get that $\|\mA\vzb_k - \vb\|_\infty - \|\mA \vz_* - \vb\|_\infty \leq \Gap(\vxb_k, \vu)$. Thus, it follows that our algorithm can output $\vzb_k$ with $\E[\|\mA\vzb_k - \vb\|_\infty - \|\mA \vz_* - \vb\|_\infty] \leq \epsilon$ using the number of arithmetic operations stated in \eqref{eq:box-simplex-arithmetic-complexity}. Because the gap in this case is bounded using a sparse $\vy$, our runtime in this case can be further improved by choosing $\vy_0$ as the vector of all zeros, using the standard (not weighted) Euclidean norm for the dual variables and selecting $p_j \propto \sqrt{\|\mA_{:j}\|_\infty},$ in which case the result improves the dependence on $\vsigma$ from $\|\vsigma\|_{2/5}$ to $\|\vsigma\|_{1/2}.$

Comparing to the state of the art, there are two main examples of similar runtimes. The original result by Sherman leads to the runtime of the order $\widetilde{\cO}\big(\nnz(\mA) \frac{\|\vsigma\|_\infty}{\epsilon}\big).$ Our result improves over this one for the original problem \eqref{eq:infty-reg} when $\|\vsigma\|_{1/2} = o\big(\frac{\nnz(\mA)}{n + c}\|\vsigma\|_\infty\big),$ which can happen when the matrix $\mA$ is dense and the vector $\vsigma$ is highly nonuniform. On the other hand, a coordinate method for the same problem was obtained in \cite{sidford2018coordinate}, with runtime $\widetilde{\cO}\big(d c + \frac{\min\{n, d\} + \sqrt{n \min\{d, \|\vz_*\|_2\}}c \|\vsigma\|_\infty}{\epsilon}\big).$ Our result can be seen to improve over this one whenever $\|\vsigma\|_{1/2} = o\big(\frac{\min\{n, d\} + \sqrt{n \min\{d, \|\vz_*\|_2\}}c \|\vsigma\|_\infty}{n + c}\big),$ which happens whenever $\vsigma$ is highly nonuniform, with the gap being larger depending on the size of the matrix. 

\subsection{Least Absolute Deviation}\label{sec:apps-LAD}

The least absolute deviation problem has as an input a matrix $\mA \in \sR^{n \times d}$ and a vector $\vb \in \sR^n$ and asks for finding a vector $\vz_* \in \sR^d$ that solves the minimization problem
\begin{equation}\label{eq:least-abs-dev}
    \min_{\vz \in \sR^d}\|\mA \vz - \vb\|_1.
\end{equation}
Equivalently, using standard Lagrangian duality, this problem can be written in a min-max form as 
\begin{equation}\label{eq:least-abs-dev-min-max}
    \min_{\vz \in \sR^d}\max_{\vy \in [-1, 1]^n}\innp{\mA \vz - \vb, \vy}.
\end{equation}
Consider a GMVI problem \eqref{eq:main-problem} formulated based on \eqref{eq:least-abs-dev-min-max}, using standard arguments, as follows. For the variables, we use the stacked vector $\vx = (\vz^\top, \vy^\top)^\top \in \sR^{d + n}$. The operator $\mF(\vx)$ is defined as $\mF(\vx) = (\vy^\top \mA, - (\mA \vz - \vb)^\top)^\top.$ The function $g(\vx)$ is independent of $\vz$ and is equal to the indicator of the set  $[-1, 1]^n$ (the unit $\ell_\infty$ ball). Let $\ve^d_j$ denote the $j$\textsuperscript{th} standard basis vector in $\sR^d$, and, similarly, let $\ve^n_i$ denote the $i$\textsuperscript{th} standard basis vector in $\sR^n$. For pairs of indices $i \in \{1, \dots, n\},$ $j \in \{1, \dots, d\}$ such that the entry $A_{ij} \neq 0,$ define the operator $\mF_{ij}$ by $\mF_{ij}(\vx) := (A_{ij}y_i \ve^d_j, -(A_{ij}z_j - b_i)\ve^n_i)^\top.$ Then
\begin{equation}\notag
    \mF(\vx) = \sum_{i, j: A_{ij} \neq 0} \mF_{ij}(\vx).
\end{equation}
The number of summation terms $m$ in the finite-sum form of $\mF$ stated above equals $\nnz(\mA)$---the number of nonzero entries of $\mA.$ 

Observe that in this case, the evaluation of the component operators $\mF_{ij}$ takes a constant number of arithmetic operations, while the iterations of our algorithm can be implemented with a constant number of arithmetic operations, using ideas and insights from Section \ref{sec:algo} (see also Algorithm \ref{alg:lazy} in Appendix~\ref{appx:lazy-algo}). Since the Lipschitz parameter of each $\mF_{ij}$ is $L_{ij} = |A_{ij}|,$ we get that our algorithm can be implemented to run in time 
\begin{equation}\notag
         \cO\bigg(\nnz(\mA) + \frac{\|\vect(\mA)\|_{1/2} \|\vx - \vx_0\|^2}{\epsilon}\bigg),
\end{equation}
to obtain $\E[\Gap(\vxb_k, \vx)] \leq \epsilon,$ 
where $\vect(\mA)$ denotes the vector obtained from stacking the columns of $\mA$ and $\vx$ is any vector such that its $\vy$-portion is in $[-1, 1]^n$, possibly dependent on the algorithm randomness. 

Specifically, for $\vx = (\vz_*^\top, \vy^\top)^{\top}$ with $y_i = \sign((\mA\vzb_k)_i - b_i)$ and $\vxb_k = (\vzb_k^\top, \vyb_k^\top)^\top$ we have 
\begin{equation}\notag
    \Gap(\vxb_k, \vx) = \|\mA\vzb_k - \vb\|_1 - \innp{\mA\vz_* - \vb, \vyb_k} \geq \|\mA\vzb_k - \vb\|_1 - \|\mA\vz_* - \vb\|_1,
\end{equation}
so the point output by the algorithm has expected optimality gap (for the original problem \eqref{eq:least-abs-dev}) at most $\epsilon.$

The total runtime of the algorithm can thus be bounded by
\begin{equation}\notag
         \cO\bigg(\nnz(\mA) + \frac{\|\vect(\mA)\|_{1/2} (n + \|\vz_* - \vz_0\|_2^2)}{\epsilon}\bigg).
\end{equation}
Compared to the state of the art methods, our algorithm has significantly lower complexity whenever $\vect(\mA)$ is highly nonuniform. In particular, full vector update methods such as \cite{korpelevich1976extragradient,nemirovski2004prox,nesterov2007dual,popov1980modification} and primal-dual methods such as \cite{chambolle2011first} lead to arithmetic complexity of the order $\cO(\nnz(\mA)\frac{\|\mA\|_2\|\vx - \vx_0\|_2^2}{\epsilon})$ for the min-max version of the problem \eqref{eq:least-abs-dev-min-max}, where $\|\mA\|_2$ is the operator (or spectral) norm of $\mA$; or, by a similar reduction to what is discussed above for our algorithm, $\cO(\nnz(\mA)\frac{\|\mA\|_2(n + \|\vz_* - \vz_0\|_2^2)}{\epsilon})$ for the original least absolute deviation problem stated in \eqref{eq:least-abs-dev}. This complexity is higher than ours whenever $\nnz(\mA) \|\mA\|_2 > \|\vect(\mA)\|_{1/2}.$ On the other hand, the same problem can be addressed with stochastic variants of the Chambolle-Pock method \cite{chambolle2011first}, such as those reported in \cite{song2022coordinate,alacaoglu2022complexity}, with runtimes $\cO(\nnz(\mA)\frac{\max_{1\leq j \leq d}\|\mA_{:j}\|_2\|\vx - \vx_0\|_2^2}{\epsilon})$ for the min-max version of the problem or $\cO(\nnz(\mA)\frac{\max_{1\leq j \leq d}\|\mA_{:j}\|_2(n + \|\vz_* - \vz_0\|_2^2)}{\epsilon})$ for the original least absolute deviation problem. Our result provides a lower runtime whenever $\max_{1\leq j \leq d}\|\mA_{:j}\|_2 \nnz(\mA) > \|\vect(\mA)\|_{1/2}.$

\section{Conclusion}

We obtained a new method that simultaneously addresses block coordinate and finite-sum settings of generalized variational inequalities of Minty type \eqref{eq:main-problem}. For the former settings, our method improves over state of the art and is the first method that transparently shows complexity improvements for block coordinate-type methods in settings of interest. In particular, when the associated block Lipschitz parameters are highly nonuniform, the improvement can be linear in the number of blocks (equal to the dimension for coordinate methods). In the finite-sum settings, our method can be seen as performing variance reduction of SAGA-type. Our result improves over state of the art whenever Lipschitz parameters of the components in the finite-sum are highly non-uniform, in which case the improvement is of the order $\sqrt{m},$ where $m$ is the number of summands in the finite-sum decomposition of the input operator. 

Some interesting open questions remain. While our method exhibits significant improvements over state of the art when the associated component Lipschitz parameters are highly nonuniform, in the other extreme---when the Lipschitz parameters are uniform---our method exhibits \emph{worse} complexity than either classical full vector update methods or state of the art variance-reduced methods. There is an interesting phenomenon related to complexity of finite-sum variational inequalities that arises in these results: going from classical methods to variance-reduced methods, the worst-case complexity can improve (with the extreme case being highly nonuniform Lipschitz parameters) or deteriorate (with the extreme case being uniform Lipschitz parameters) by a factor $\sqrt{m}$, where $m$ is the number of components in the finite-sum. Moving from the state of the art variance-reduced methods to our method, the worst-case complexity can improve or deteriorate by an additional factor $\sqrt{m}$. It is not known whether such trade-offs---sacrificing complexity in bad cases and gaining in good cases---can be avoided. An ``ideal'' result that one would hope for would be obtaining complexity scaling with $\|\vlambda\|_1$ instead of $\|\vlambda\|_{1/2}$ in our bounds, which would never be worse than the complexity of classical methods like \cite{korpelevich1976extragradient,nemirovski2004prox,nesterov2007dual,popov1980modification}. Alternatively, a bound scaling with $\|\vlambda\|_{2/3}$ would never be worse than the complexity of state of the art variance-reduced methods such as  \cite{palaniappan2016stochastic,alacaoglu2022stochastic}. Finally, ruling out such results and showing that the aforementioned trade-offs are necessary would be intriguing. 

\section*{Acknowledgements}

This research was supported in part by the Air Force Office of Scientific Research under award number FA9550-24-1-0076 and by the U.S.\ Office of Naval Research under contract number  N00014-22-1-2348. Any opinions, findings and conclusions or recommendations expressed in this material are those of the author(s) and do not necessarily reflect the views of the U.S. Department of Defense. 

The author thanks George Lan, who pointed out the gap in the literature on randomized block coordinate methods, which is how this paper came to be. The author also thanks Ahmet Alacaoglu, Yudong Chen, and George Lan for helpful discussions regarding applications of the obtained method \& complexity results. 

\bibliographystyle{abbrv}
\bibliography{references,references-vr}

\begin{thebibliography}{10}

\bibitem{alacaoglu2022complexity}
A.~Alacaoglu, V.~Cevher, and S.~J. Wright.
\newblock On the complexity of a practical primal-dual coordinate method.
\newblock {\em arXiv preprint arXiv:2201.07684}, 2022.

\bibitem{alacaoglu2017smooth}
A.~Alacaoglu, Q.~T. Dinh, O.~Fercoq, and V.~Cevher.
\newblock Smooth primal-dual coordinate descent algorithms for nonsmooth convex
  optimization.
\newblock In {\em Advances in Neural Information Processing Systems}, 2017.

\bibitem{alacaoglu2020random}
A.~Alacaoglu, O.~Fercoq, and V.~Cevher.
\newblock Random extrapolation for primal-dual coordinate descent.
\newblock In {\em International Conference on Machine Learning}, 2020.

\bibitem{alacaoglu2022stochastic}
A.~Alacaoglu and Y.~Malitsky.
\newblock Stochastic variance reduction for variational inequality methods.
\newblock In {\em Conference on Learning Theory}, pages 778--816, 2022.

\bibitem{allen2017katyusha}
Z.~Allen-Zhu.
\newblock Katyusha: The first direct acceleration of stochastic gradient
  methods.
\newblock {\em The Journal of Machine Learning Research}, 18(1):8194--8244,
  2017.

\bibitem{allen2016even}
Z.~Allen-Zhu, Z.~Qu, P.~Richt{\'a}rik, and Y.~Yuan.
\newblock Even faster accelerated coordinate descent using non-uniform
  sampling.
\newblock In {\em International Conference on Machine Learning}, 2016.

\bibitem{beck2013convergence}
A.~Beck and L.~Tetruashvili.
\newblock On the convergence of block coordinate descent type methods.
\newblock {\em SIAM Journal on Optimization}, 23(4):2037--2060, 2013.

\bibitem{cai2024variance}
X.~Cai, A.~Alacaoglu, and J.~Diakonikolas.
\newblock Variance reduced {H}alpern iteration for finite-sum monotone
  inclusions.
\newblock In {\em The Twelfth International Conference on Learning
  Representations}, 2024.

\bibitem{cai2023cyclic}
X.~Cai, C.~Song, S.~Wright, and J.~Diakonikolas.
\newblock Cyclic block coordinate descent with variance reduction for composite
  nonconvex optimization.
\newblock In {\em International Conference on Machine Learning}, pages
  3469--3494, 2023.

\bibitem{carmon2019variance}
Y.~Carmon, Y.~Jin, A.~Sidford, and K.~Tian.
\newblock Variance reduction for matrix games.
\newblock {\em Advances in Neural Information Processing Systems}, 32, 2019.

\bibitem{carmon2020coordinate}
Y.~Carmon, Y.~Jin, A.~Sidford, and K.~Tian.
\newblock Coordinate methods for matrix games.
\newblock In {\em 2020 IEEE 61st Annual Symposium on Foundations of Computer
  Science (FOCS)}, pages 283--293, 2020.

\bibitem{chakrabarti2023block}
D.~Chakrabarti, J.~Diakonikolas, and C.~Kroer.
\newblock Block-coordinate methods and restarting for solving extensive-form
  games.
\newblock {\em Advances in Neural Information Processing Systems},
  36:10692--10720, 2023.

\bibitem{chambolle2018stochastic}
A.~Chambolle, M.~J. Ehrhardt, P.~Richt{\'a}rik, and C.-B. Schonlieb.
\newblock Stochastic primal-dual hybrid gradient algorithm with arbitrary
  sampling and imaging applications.
\newblock {\em SIAM Journal on Optimization}, 28(4):2783--2808, 2018.

\bibitem{chambolle2011first}
A.~Chambolle and T.~Pock.
\newblock A first-order primal-dual algorithm for convex problems with
  applications to imaging.
\newblock {\em Journal of mathematical imaging and vision}, 40(1):120--145,
  2011.

\bibitem{chavdarova2019reducing}
T.~Chavdarova, G.~Gidel, F.~Fleuret, and S.~Lacoste-Julien.
\newblock Reducing noise in {GAN} training with variance reduced extragradient.
\newblock In {\em Advances in Neural Information Processing Systems}, pages
  391--401, 2019.

\bibitem{chow2017cyclic}
Y.~T. Chow, T.~Wu, and W.~Yin.
\newblock Cyclic coordinate-update algorithms for fixed-point problems:
  Analysis and applications.
\newblock {\em SIAM Journal on Scientific Computing}, 39(4):A1280--A1300, 2017.

\bibitem{cohen2018acceleration}
M.~Cohen, J.~Diakonikolas, and L.~Orecchia.
\newblock On acceleration with noise-corrupted gradients.
\newblock In {\em International Conference on Machine Learning}, pages
  1019--1028, 2018.

\bibitem{dang2014randomized}
C.~Dang and G.~Lan.
\newblock Randomized first-order methods for saddle point optimization.
\newblock {\em arXiv preprint arXiv:1409.8625}, 2014.

\bibitem{d2018optimal}
A.~d'Aspremont, C.~Guzm\'an, and M.~Jaggi.
\newblock Optimal affine-invariant smooth minimization algorithms.
\newblock {\em SIAM Journal on Optimization}, 28(3):2384--2405, 2018.

\bibitem{defazio2014saga}
A.~Defazio, F.~Bach, and S.~Lacoste-Julien.
\newblock {SAGA}: A fast incremental gradient method with support for
  non-strongly convex composite objectives.
\newblock In {\em Advances in Neural Information Processing Systems}, pages
  1646--1654, 2014.

\bibitem{diakonikolas2020locally}
J.~Diakonikolas, A.~Carderera, and S.~Pokutta.
\newblock Locally accelerated conditional gradients.
\newblock In {\em International Conference on Artificial Intelligence and
  Statistics}, pages 1737--1747, 2020.

\bibitem{diakonikolas2021efficient}
J.~Diakonikolas, C.~Daskalakis, and M.~I. Jordan.
\newblock Efficient methods for structured nonconvex-nonconcave min-max
  optimization.
\newblock In {\em International Conference on Artificial Intelligence and
  Statistics}, pages 2746--2754, 2021.

\bibitem{diakonikolas2020fair}
J.~Diakonikolas, M.~Fazel, and L.~Orecchia.
\newblock Fair packing and covering on a relative scale.
\newblock {\em SIAM Journal on Optimization}, 30(4):3284--3314, 2020.

\bibitem{diakonikolas2024complementary}
J.~Diakonikolas and C.~Guzm{\'a}n.
\newblock Complementary composite minimization, small gradients in general
  norms, and applications.
\newblock {\em Mathematical Programming}, pages 1--45, 2024.

\bibitem{diakonikolas2018accelerated}
J.~Diakonikolas and L.~Orecchia.
\newblock Accelerated extra-gradient descent: A novel accelerated first-order
  method.
\newblock In {\em 9th Innovations in Theoretical Computer Science Conference
  (ITCS 2018)}, volume~94, 2018.

\bibitem{diakonikolas2018alternating}
J.~Diakonikolas and L.~Orecchia.
\newblock Alternating randomized block coordinate descent.
\newblock In {\em International Conference on Machine Learning}, 2018.

\bibitem{diakonikolas2019approximate}
J.~Diakonikolas and L.~Orecchia.
\newblock The approximate duality gap technique: A unified theory of
  first-order methods.
\newblock {\em SIAM Journal on Optimization}, 29(1):660--689, 2019.

\bibitem{fang2018spider}
C.~Fang, C.~J. Li, Z.~Lin, and T.~Zhang.
\newblock Spider: Near-optimal non-convex optimization via stochastic
  path-integrated differential estimator.
\newblock {\em Advances in Neural Information Processing Systems}, 2018.

\bibitem{fercoq2019coordinate}
O.~Fercoq and P.~Bianchi.
\newblock A coordinate-descent primal-dual algorithm with large step size and
  possibly nonseparable functions.
\newblock {\em SIAM Journal on Optimization}, 29(1):100--134, 2019.

\bibitem{fercoq2015accelerated}
O.~Fercoq and P.~Richt{\'a}rik.
\newblock Accelerated, parallel, and proximal coordinate descent.
\newblock {\em SIAM Journal on Optimization}, 25(4):1997--2023, 2015.

\bibitem{gower2020variance}
R.~M. Gower, M.~Schmidt, F.~Bach, and P.~Richt{\'a}rik.
\newblock Variance-reduced methods for machine learning.
\newblock {\em Proceedings of the IEEE}, 108(11):1968--1983, 2020.

\bibitem{gurbuzbalaban2017cyclic}
M.~G{\"u}rb{\"u}zbalaban, A.~Ozdaglar, P.~A. Parrilo, and N.~D. Vanli.
\newblock When cyclic coordinate descent outperforms randomized coordinate
  descent.
\newblock In {\em Advances in Neural Information Processing Systems}, 2017.

\bibitem{han2021lower}
Y.~Han, G.~Xie, and Z.~Zhang.
\newblock Lower complexity bounds of finite-sum optimization problems: The
  results and construction.
\newblock {\em Journal of Machine Learning Research}, 25(2):1--86, 2024.

\bibitem{he2020point}
N.~He, Z.~Harchaoui, Y.~Wang, and L.~Song.
\newblock Point process estimation with mirror prox algorithms.
\newblock {\em Applied Mathematics \& Optimization}, 82(3):919--947, 2020.

\bibitem{johnson2013accelerating}
R.~Johnson and T.~Zhang.
\newblock Accelerating stochastic gradient descent using predictive variance
  reduction.
\newblock In {\em Advances in neural information processing systems}, pages
  315--323, 2013.

\bibitem{karczmarz1937angenaherte}
S.~Karczmarz.
\newblock Angenaherte auflosung von systemen linearer glei-chungen.
\newblock {\em Bull. Int. Acad. Pol. Sic. Let., Cl. Sci. Math. Nat.}, pages
  355--357, 1937.

\bibitem{korpelevich1976extragradient}
G.~Korpelevich.
\newblock The extragradient method for finding saddle points and other
  problems.
\newblock {\em Matecon}, 12:747--756, 1976.

\bibitem{kotsalis2022simple}
G.~Kotsalis, G.~Lan, and T.~Li.
\newblock Simple and optimal methods for stochastic variational inequalities,
  {I}: Operator extrapolation.
\newblock {\em SIAM Journal on Optimization}, 32(3):2041--2073, 2022.

\bibitem{kotsalis2022simpleII}
G.~Kotsalis, G.~Lan, and T.~Li.
\newblock Simple and optimal methods for stochastic variational inequalities,
  {II}: Markovian noise and policy evaluation in reinforcement learning.
\newblock {\em SIAM Journal on Optimization}, 32(2):1120--1155, 2022.

\bibitem{kovalev2020don}
D.~Kovalev, S.~Horv{\'a}th, and P.~Richt{\'a}rik.
\newblock Don’t jump through hoops and remove those loops: {SVRG} and
  katyusha are better without the outer loop.
\newblock In {\em Proc.~ALT'20}, 2020.

\bibitem{latafat2019new}
P.~Latafat, N.~M. Freris, and P.~Patrinos.
\newblock A new randomized block-coordinate primal-dual proximal algorithm for
  distributed optimization.
\newblock {\em IEEE Transactions on Automatic Control}, 64(10):4050--4065,
  2019.

\bibitem{lee2019random}
C.-P. Lee and S.~J. Wright.
\newblock Random permutations fix a worst case for cyclic coordinate descent.
\newblock {\em IMA Journal of Numerical Analysis}, 39(3):1246--1275, 2019.

\bibitem{lee2013efficient}
Y.~T. Lee and A.~Sidford.
\newblock Efficient accelerated coordinate descent methods and faster
  algorithms for solving linear systems.
\newblock In {\em 2013 IEEE 54th Annual Symposium on Foundations of Computer
  Science}, pages 147--156. IEEE, 2013.

\bibitem{lee2015efficient}
Y.~T. Lee and A.~Sidford.
\newblock Efficient inverse maintenance and faster algorithms for linear
  programming.
\newblock In {\em 2015 IEEE 56th annual symposium on foundations of computer
  science}, pages 230--249. IEEE, 2015.

\bibitem{leventhal2010randomized}
D.~Leventhal and A.~S. Lewis.
\newblock Randomized methods for linear constraints: convergence rates and
  conditioning.
\newblock {\em Mathematics of Operations Research}, 35(3):641--654, 2010.

\bibitem{li2021page}
Z.~Li, H.~Bao, X.~Zhang, and P.~Richt{\'a}rik.
\newblock Page: A simple and optimal probabilistic gradient estimator for
  nonconvex optimization.
\newblock In {\em Proc.~ICML'21}, 2021.

\bibitem{lin2023accelerated}
C.~Y. Lin, C.~Song, and J.~Diakonikolas.
\newblock Accelerated cyclic coordinate dual averaging with extrapolation for
  composite convex optimization.
\newblock In {\em International Conference on Machine Learning}, pages
  21101--21126, 2023.

\bibitem{lin2014accelerated}
Q.~Lin, Z.~Lu, and L.~Xiao.
\newblock An accelerated proximal coordinate gradient method.
\newblock In {\em Advances in Neural Information Processing Systems}, pages
  3059--3067, 2014.

\bibitem{mehta2024primal}
R.~Mehta, J.~Diakonikolas, and Z.~Harchaoui.
\newblock Drago: Primal-dual coupled variance reduction for faster
  distributionally robust optimization.
\newblock In {\em Advances in Neural Information Processing Systems}, 2024.

\bibitem{nemirovski2004prox}
A.~Nemirovski.
\newblock Prox-method with rate of convergence ${O} (1/t)$ for variational
  inequalities with {L}ipschitz continuous monotone operators and smooth
  convex-concave saddle point problems.
\newblock {\em SIAM Journal on Optimization}, 15(1):229--251, 2004.

\bibitem{nesterov2005smooth}
Y.~Nesterov.
\newblock Smooth minimization of non-smooth functions.
\newblock {\em Mathematical programming}, 103(1):127--152, 2005.

\bibitem{nesterov2007dual}
Y.~Nesterov.
\newblock Dual extrapolation and its applications to solving variational
  inequalities and related problems.
\newblock {\em Mathematical Programming}, 109(2-3):319--344, 2007.

\bibitem{nesterov2012efficiency}
Y.~Nesterov.
\newblock Efficiency of coordinate descent methods on huge-scale optimization
  problems.
\newblock {\em SIAM Journal on Optimization}, 22(2):341--362, 2012.

\bibitem{nesterov2017efficiency}
Y.~Nesterov and S.~U. Stich.
\newblock Efficiency of the accelerated coordinate descent method on structured
  optimization problems.
\newblock {\em SIAM Journal on Optimization}, 27(1):110--123, 2017.

\bibitem{nguyen2017sarah}
L.~M. Nguyen, J.~Liu, K.~Scheinberg, and M.~Tak{\'a}{\v{c}}.
\newblock Sarah: A novel method for machine learning problems using stochastic
  recursive gradient.
\newblock In {\em International Conference on Machine Learning}, 2017.

\bibitem{ortega1970iterative}
J.~Ortega and W.~Rheinboldt.
\newblock {\em Iterative Solution of Nonlinear Equations in Several Variables},
  volume~30.
\newblock SIAM, 1970.

\bibitem{osborne1960pre}
E.~Osborne.
\newblock On pre-conditioning of matrices.
\newblock {\em Journal of the ACM (JACM)}, 7(4):338--345, 1960.

\bibitem{palaniappan2016stochastic}
B.~Palaniappan and F.~Bach.
\newblock Stochastic variance reduction methods for saddle-point problems.
\newblock In {\em Advances in Neural Information Processing Systems}, pages
  1416--1424, 2016.

\bibitem{popov1980modification}
L.~D. Popov.
\newblock A modification of the {Arrow-Hurwicz} method for search of saddle
  points.
\newblock {\em Mathematical notes of the Academy of Sciences of the USSR},
  28(5):845--848, 1980.

\bibitem{qu2016coordinate}
Z.~Qu and P.~Richt{\'a}rik.
\newblock Coordinate descent with arbitrary sampling {I}: Algorithms and
  complexity.
\newblock {\em Optimization Methods and Software}, 31(5):829--857, 2016.

\bibitem{richtarik2014iteration}
P.~Richt{\'a}rik and M.~Tak{\'a}{\v{c}}.
\newblock Iteration complexity of randomized block-coordinate descent methods
  for minimizing a composite function.
\newblock {\em Mathematical Programming}, 144(1-2):1--38, 2014.

\bibitem{schmidt2017minimizing}
M.~Schmidt, N.~Le~Roux, and F.~Bach.
\newblock Minimizing finite sums with the stochastic average gradient.
\newblock {\em Mathematical Programming}, 162(1):83--112, 2017.

\bibitem{shalev2010learnability}
S.~Shalev-Shwartz, O.~Shamir, N.~Srebro, and K.~Sridharan.
\newblock Learnability, stability and uniform convergence.
\newblock {\em The Journal of Machine Learning Research}, 11:2635--2670, 2010.

\bibitem{sherman2017area}
J.~Sherman.
\newblock Area-convexity, $\ell_\infty$ regularization, and undirected
  multicommodity flow.
\newblock In {\em Proceedings of the 49th Annual ACM SIGACT Symposium on Theory
  of Computing}, pages 452--460, 2017.

\bibitem{sidford2018coordinate}
A.~Sidford and K.~Tian.
\newblock Coordinate methods for accelerating $\ell_\infty$ regression and
  faster approximate maximum flow.
\newblock In {\em 2018 IEEE 59th Annual Symposium on Foundations of Computer
  Science (FOCS)}, pages 922--933, 2018.

\bibitem{song2023cyclic}
C.~Song and J.~Diakonikolas.
\newblock Cyclic coordinate dual averaging with extrapolation.
\newblock {\em SIAM Journal on Optimization}, 33(4):2935--2961, 2023.

\bibitem{song2022coordinate}
C.~Song, C.~Y. Lin, S.~Wright, and J.~Diakonikolas.
\newblock Coordinate linear variance reduction for generalized linear
  programming.
\newblock {\em Advances in Neural Information Processing Systems},
  35:22049--22063, 2022.

\bibitem{song2021variance}
C.~Song, S.~J. Wright, and J.~Diakonikolas.
\newblock Variance reduction via primal-dual accelerated dual averaging for
  nonsmooth convex finite-sums.
\newblock In {\em International Conference on Machine Learning}, 2021.

\bibitem{strohmer2009randomized}
T.~Strohmer and R.~Vershynin.
\newblock A randomized {K}aczmarz algorithm with exponential convergence.
\newblock {\em Journal of Fourier Analysis and Applications}, 15(2):262--278,
  2009.

\bibitem{sun2019worst}
R.~Sun and Y.~Ye.
\newblock Worst-case complexity of cyclic coordinate descent: ${O}(n^2)$ gap
  with randomized version.
\newblock {\em Mathematical Programming}, pages 1--34, 2019.

\bibitem{tseng2009coordinate}
P.~Tseng and S.~Yun.
\newblock A coordinate gradient descent method for nonsmooth separable
  minimization.
\newblock {\em Mathematical Programming}, 117(1-2):387--423, 2009.

\bibitem{woodworth2016tight}
B.~E. Woodworth and N.~Srebro.
\newblock Tight complexity bounds for optimizing composite objectives.
\newblock {\em Advances in Neural Information Processing Systems},
  29:3639--3647, 2016.

\bibitem{wright2020analyzing}
S.~Wright and C.-p. Lee.
\newblock Analyzing random permutations for cyclic coordinate descent.
\newblock {\em Mathematics of Computation}, 89(325):2217--2248, 2020.

\bibitem{yousefian2018stochastic}
F.~Yousefian, A.~Nedi{\'c}, and U.~V. Shanbhag.
\newblock On stochastic mirror-prox algorithms for stochastic {C}artesian
  variational inequalities: Randomized block coordinate and optimal averaging
  schemes.
\newblock {\em Set-Valued and Variational Analysis}, 26:789--819, 2018.

\end{thebibliography}

\newpage
\appendix

\section{Implementation Version of the Algorithm}\label{appx:lazy-algo}

In this section, we provide the pseudocode for the implementation version of Algorithm \ref{alg:main}. Since the algorithm only maintains scalars and vectors of size $m$ and $d,$ the total memory requirement of the algorithm is $\cO(m+d) = \cO(d).$ The per-iteration cost of the algorithm is determined by the cost to compute one block $\mF\bl{j}$ of $\mF$ and update the coordinate blocks of $\vx$ needed for the computation of $\mF\bl{j}.$ In most cases of interest, the minimization problems defining the updates for $\vx\bl{j}$ are  either solvable in closed form or in (near-)linear time needed for reading $\vx\bl{j}.$ In such cases, the per-iteration cost is dominated by computing $\mF\bl{j}$ for $j \in\{j_1, j_2\}$ and agrees with the standard cost of block coordinate-style methods.

\begin{algorithm}
\caption{Randomized Extrapolated BCM (Lazy Implementation Version)}\label{alg:lazy}
    \begin{algorithmic}[1]
        \State \textbf{Input}: $\vx_0 \in \dom(g),\, \vp, \vq \in \Delta_m, K, \gamma, \Lpq$
        \State \textbf{Initialization}: $\mFt \leftarrow \mFt_{-} \leftarrow \mF(\vx_0),$ $a \leftarrow A \leftarrow 0,$ $\vz \leftarrow \vzero_d, \vx \leftarrow \vx_0$, $\mA_{\rm last} \leftarrow \vzero_m$, $\vxb \leftarrow \vzero$, $q_{j_*} = \min_{1\leq j \leq m}q_j$
        \State If $\gamma = 0,$ draw $\lceil K/m \rceil$ numbers from $\{1, \dots K\}$ uniformly at random and assign to the set $\cS_K$. Otherwise set $\cS_K = \emptyset$
        \For{$k = 1:K$}
        \State $a_{-} \leftarrow a$
        \State $a \leftarrow \begin{cases}
            \sqrt{\frac{2}{3}}\frac{1}{10 \Lpq}, &\text{ if } \gamma = 0\\
            \min\{\sqrt{1 + q_{j_*}/5}\, a_{-},\, \frac{A\gamma + 1}{10 \Lpq}\} , &\text{ if } \gamma \neq 0
        \end{cases},\;$  $A \leftarrow a + A$
        \State \parbox[t]{\dimexpr\textwidth-\leftmargin-\labelsep-\labelwidth}{Randomly draw $j_1, j_2$ from $\{1, \dots, m\}$ according to the probability distributions $\vp \in \Delta_m,$ $\vq \in \Delta_m,$ respectively, independently of the history and of each other \strut} 
        \State \parbox[t]{\dimexpr\textwidth-\leftmargin-\labelsep-\labelwidth}{For all $j \in \{1, \dots, m\}$ such that $\mF\bl{j_1}$ depends on $\vx\bl{j}$, update $\vz\bl{j} \leftarrow(A - \mA_{\rm last}[j])\mFt\bl{j},$ $\vx\bl{j} \leftarrow \argmin_{\vu\bl{j} \in \R^{|\cS^j|}}\big\{\innp{\vz\bl{j}, \vu\bl{j}} + A g^j(\vu\bl{j}) + D^j(\vu\bl{j}, \vx_0\bl{j})\big\}$ and compute $\mF\bl{j_1}$\strut}
        \State $\mA_{\rm last}[j_1] \leftarrow k$
        \State $\mFh\bl{j_1} \leftarrow \mFt\bl{j_1} + \frac{a_{-}}{a\, p_{j_1}}(\mF\bl{j_1}(\vx) - \mFt_{-}\bl{j_1})$
        \State $\vz\bl{j_1} \leftarrow \vz\bl{j_1} + a \mFh\bl{j_1}$
        \State $\vx\bl{j_1} \leftarrow \argmin_{\vu\bl{j_1} \in \R^{|\cS^{j_1}|}}\big\{\innp{\vz\bl{j_1}, \vu\bl{j_1}} + A g^{j_1}(\vu\bl{j_1}) + D^j(\vu\bl{j_1}, \vx_0\bl{j_1})\big\}$
        \State \parbox[t]{\dimexpr\textwidth-\leftmargin-\labelsep-\labelwidth}{For all $j \in \{1, \dots, m\}\backslash\{j_1\}$ such that $\mF\bl{j_2}$ depends on $\vx\bl{j}$, update $\vz\bl{j} \leftarrow(A - \mA_{\rm last}[j])\mFt\bl{j},$ $\vx\bl{j} \leftarrow \argmin_{\vu\bl{j} \in \R^{|\cS^j|}}\big\{\innp{\vz\bl{j}, \vu\bl{j}} + A g^j(\vu\bl{j}) + D^j(\vu\bl{j}, \vx_0\bl{j})\big\}$ and compute $\mF\bl{j_2}(\vx)$\strut}
        \State $\mFt\bl{j_2}_{-} \leftarrow \mFt\bl{j_2}$
        \State $\mFt\bl{j_2} \leftarrow \mF\bl{j_2}(\vx)$
        \If{$k \in \cS_K$}
        \State $\vxb \leftarrow \vxb + \frac{1}{|\cS_K|} \vx$
        \EndIf
        \EndFor
        \State \Return  $\vx, \vxb$
    \end{algorithmic}
\end{algorithm}

\section{Variance of Sum Inner Product for General Norms}\label{appx:variance-bregman}

In this section we provide a lemma that allows us to bound the expectation of the inner-product of a sum $\innp{\sum_{i=1}^m \vu_i, \vx}$, where $\vu_i$ are zero-mean and independent under conditioning on a filtration process, but $\vx$ is \emph{not} necessarily independent of $\vu_i.$ When working with Euclidean norms,  we can simply use Young's inequality to write $\innp{\sum_{i=1}^m \vu_i, \vx} \leq \frac{1}{2}\|\sum_{i=1}^m \vu_i\|_2^2 + \frac{1}{2}\|\vx\|_2^2$ and use conditional pairwise independence of $\vu_i$'s and the tower property of expectation to write $\E\big[\|\sum_{i=1}^m \vu_i\|_2^2\big] = \sum_{i=1}^m \E[\|\vu_i\|_2^2].$ This is not possible for more general norms, as they are not necessarilly inner product-induced. Instead, we rely upon the following lemma, adapted from \cite[Lemma 3.5]{alacaoglu2022stochastic} and stated here for completeness.

\begin{lemma}\cite[Lemma 16]{alacaoglu2022stochastic}\label{lemma:variance-bregman}
    Let $\cF = (\cF_k)_{k \geq 1}$ be a filtration and $\vu_k$ be a stochastic process adapted to $\cF$ with $\E[\vu_k|\cF_k] = 0.$ Given a fixed (independent of $\vu_k$, $k \geq 1$) $\vx_0 \in \dom(g)$ and any (possibly dependent on $\vu_k$, $k \geq 1$) $\vx \in \dom(g)$, for any $K \geq 1,$
    \begin{equation}
        \E\bigg[\sum_{k=1}^K \innp{\vu_k, \vx}\bigg] \leq \E[D(\vx, \vx_0)] + \frac{1}{2}\sum_{k=1}^K \E\big[\|\vu_k\|_2^2\big].
    \end{equation}
\end{lemma}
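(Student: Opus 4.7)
The plan is to prove the lemma via the standard ``ghost iterate'' (virtual FTRL) construction. First, I would introduce an auxiliary sequence $\vxt_k$ driven by the martingale differences $\vu_k$, setting $\vxt_0 = \vx_0$ and, for $k \geq 1$,
\[
    \vxt_k = \argmin_{\vu \in \dom(g)} \Big\{ -\sum_{i=1}^k \innp{\vu_i, \vu} + D(\vu, \vx_0) \Big\}.
\]
By construction, $\vxt_{k-1}$ depends only on $\vu_1, \dots, \vu_{k-1}$ and is therefore measurable with respect to the history through step $k-1$ (the filtration $\cF_k$ in the paper's convention, which contains all randomness strictly before step $k$). This is the crucial property that will let the cross term vanish in expectation.

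Next, I would carry out the standard dual-averaging/FTRL regret analysis on this virtual sequence. Writing $\Phi_k(\vu) := -\sum_{i=1}^k \innp{\vu_i, \vu} + D(\vu, \vx_0)$, the function $\Phi_k$ inherits $1$-strong convexity from $D(\cdot, \vx_0)$. From the identity $\Phi_k(\vxt_k) = \Phi_{k-1}(\vxt_k) - \innp{\vu_k, \vxt_k}$, the optimality of $\vxt_{k-1}$ for $\Phi_{k-1}$, and strong convexity, I get
\[
\Phi_k(\vxt_k) \geq \Phi_{k-1}(\vxt_{k-1}) + \tfrac{1}{2}\|\vxt_k - \vxt_{k-1}\|^2 - \innp{\vu_k, \vxt_k}.
\]
Adding and subtracting $\innp{\vu_k, \vxt_{k-1}}$ and applying Young's inequality to absorb $-\innp{\vu_k, \vxt_k - \vxt_{k-1}}$ into the quadratic $\tfrac{1}{2}\|\vxt_k - \vxt_{k-1}\|^2$ term produces the per-step bound $\Phi_k(\vxt_k) \geq \Phi_{k-1}(\vxt_{k-1}) - \innp{\vu_k, \vxt_{k-1}} - \tfrac{1}{2}\|\vu_k\|_*^2$. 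Telescoping from $k=1$ to $K$ (with $\Phi_0(\vxt_0) = 0$) and combining with the trivial bound $\Phi_K(\vxt_K) \leq \Phi_K(\vx)$ for any $\vx \in \dom(g)$, then rearranging, yields the deterministic inequality
\[
    \sum_{k=1}^K \innp{\vu_k, \vx} \leq D(\vx, \vx_0) + \sum_{k=1}^K \innp{\vu_k, \vxt_{k-1}} + \tfrac{1}{2}\sum_{k=1}^K \|\vu_k\|_*^2,
\]
which holds pointwise and requires no independence assumption on $\vx.$

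Finally, I would take expectations of both sides. Since $\vxt_{k-1}$ is $\cF_k$-measurable and $\E[\vu_k \mid \cF_k] = 0$, the tower property gives $\E[\innp{\vu_k, \vxt_{k-1}}] = \E[\innp{\E[\vu_k \mid \cF_k], \vxt_{k-1}}] = 0$ for each $k$, so the middle term disappears, producing exactly the claimed bound. The main obstacle here is not computational but conceptual: one must resist the instinct to let $\vxt_k$ be a naive running sum and instead define it as the FTRL iterate over $\dom(g)$, since only then does strong convexity of $D$ produce the crucial cancellation of $\tfrac{1}{2}\|\vxt_k - \vxt_{k-1}\|^2$ that gives the tight $\tfrac{1}{2}\|\vu_k\|_*^2$ variance proxy. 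A minor discrepancy to flag is that the stated bound writes $\|\vu_k\|_2^2$ whereas the natural quantity produced by the argument is $\|\vu_k\|_*^2$ (the dual of the norm with respect to which $D$ is $1$-strongly convex); these coincide in the Euclidean case but in general $\|\vu_k\|_*^2$ is what the proof delivers and what the paper's applications actually require.
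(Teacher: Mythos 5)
Your proof is correct, and it is worth noting that the paper itself does not prove this lemma at all: it is imported by citation (Lemma 16 of \cite{alacaoglu2022stochastic}, adapted from Lemma 3.5 there), so the only comparison available is with the cited source. That proof uses the same ``ghost iterate'' idea but realizes it as a mirror-descent sequence, $\vxh_k = \argmin_{\vu \in \dom(g)}\{-\innp{\vu_k, \vu} + D(\vu, \vxh_{k-1})\}$ with $\vxh_0 = \vx_0$, and telescopes the three-point (prox) inequality $\innp{\vu_k, \vx - \vxh_{k-1}} \leq D(\vx,\vxh_{k-1}) - D(\vx,\vxh_k) + \tfrac12\|\vu_k\|_*^2$, whereas you anchor a single dual-averaging/FTRL objective at $\vx_0$ and exploit strong convexity of $\Phi_{k-1}$ at its minimizer. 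The two routes rest on identical ingredients---Young's inequality absorbing the cross term into the quadratic, and $\cF_k$-measurability of the ghost iterate killing $\E[\innp{\vu_k,\vxt_{k-1}}]$ via the tower property---and give the same constant; your variant has the mild advantage of mirroring the dual-averaging structure of the paper's own updates in \eqref{eq:alg-iteration} and of avoiding the Bregman telescoping bookkeeping. Your final remark is also correct and worth keeping: the $\|\vu_k\|_2^2$ in the statement should be read as $\|\vu_k\|_*^2$ (dual to the norm of strong convexity of $D$); that is exactly how the lemma is invoked in the proof of Theorem \ref{thm:main} at \eqref{eq:EB-bound}, so the discrepancy is a typo in the statement rather than a gap in your argument.
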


Lemma \ref{lemma:variance-bregman} is used in the proof of Theorem \ref{thm:main}, with $$\vu_k = a_{k-1}\Big(\frac{1}{p_{j_k}}(\mF_{j_k}(\vx_{k-1}) - \mFt_{k-1, j_k})-(\mF(\vx_{k-1}) - \mFt_{k-1})\Big),$$
which clearly satisfies the assumptions stated in the lemma. 

\end{document}